\newcommand{\ind}[1]{\mathds{1}_{#1}}
\newcommand{\R}{\mathbb{R}}
\renewcommand{\P}{\mathbb{P}}
\newcommand{\E}{\mathbb{E}}
\newcommand{\Ll}{\mathrm{L}}
\newcommand{\Ac}{\mathcal{A}}
\newcommand{\Bc}{\mathcal{B}}
\newcommand{\Dc}{\mathcal{D}}
\newcommand{\Fc}{\mathcal{F}}
\newcommand{\Gc}{\mathcal{G}}
\newcommand{\Mc}{\mathcal{M}}
\newcommand{\Nc}{\mathcal{N}}
\newcommand{\Sc}{\mathcal{S}}
\newcommand{\Tc}{\mathcal{T}}
\newcommand{\Uc}{\mathcal{U}}
\renewcommand{\a}{\alpha}
\renewcommand{\b}{\beta}
\newcommand{\G}{\Gamma}
\renewcommand{\d}{\delta}
\renewcommand{\epsilon}{\varepsilon}
\newcommand{\eps}{\varepsilon}
\newcommand{\z}{\zeta}
\renewcommand{\r}{\rho}
\newcommand{\s}{\sigma}
\renewcommand{\t}{\tau}
\renewcommand{\phi}{\varphi}
\renewcommand{\k}{\kappa}
\newcommand{\Ck}[1]{C^{#1}}
\newcommand{\loi}{\mathscr{L}}
\newcommand{\gfrac}[2]{\genfrac{}{}{0pt}{1}{#1}{#2}}
\def\Ccro{\smash{{\mathcal{C}}^{\!\!\!\raise4pt\hbox{$\scriptstyle o$}}}}
\def\Aro{\smash{{A}^{\!\!\!\raise5pt\hbox{$\scriptstyle o$}}}}
\def\Aro2{\smash{{A}^{\!\!\!\raise4pt\hbox{$\scriptstyle o$}}}}
\newcommand{\Ss}{\smash{\widetilde{S}}}
\newcommand{\Ts}{\smash{\widetilde{T}}}
\newcommand{\Gs}{\smash{\widetilde{G}}}
\newtheorem{theo}{Theorem}
\newtheorem{defi}[theo]{Definition}
\newtheorem{prop}[theo]{Proposition}
\newtheorem{lem}[theo]{Lemma}
\renewenvironment{proof}{\noindent{\bf Proof.}}{\qed}
\renewcommand{\thesection}{\arabic{section}}
\begin{document}

\renewcommand{\contentsname}{Contents}
\renewcommand{\refname}{\textbf{References}}
\renewcommand{\abstractname}{Abstract}


\begin{center}
%

\begin{Huge}
A Dynamical Curie-Weiss Model\medskip

of SOC: The Gaussian Case
\end{Huge} \bigskip \bigskip \bigskip \bigskip

\begin{Large} Matthias Gorny \end{Large} \smallskip
 
\begin{large} {\it Universit\'e Paris-Sud and ENS Paris} \end{large} \bigskip \bigskip 
\end{center}
\bigskip \bigskip \bigskip

\begin{abstract}
\noindent In this paper, we introduce a Markov process whose unique invariant distribution is the Curie-Weiss model of self-organized criticality (SOC) we designed and studied in~\cite{CerfGorny}. In the Gaussian case, we prove rigorously that it is a dynamical model of SOC: the fluctuations of the sum $S_{n}(\,\cdot\,)$ of the process evolve in a time scale of order $\sqrt{n}$ and in a space scale of order $n^{3/4}$ and the limiting process is the solution of a "critical" stochastic differential equation.
\end{abstract}
\bigskip \bigskip \bigskip \bigskip \bigskip

{\it AMS 2010 subject classifications:} 60J60 60K35

{\it Keywords:} Ising Curie-Weiss, self-organized criticality, critical fluctuations, Langevin diffusion, collapsing processes

\newpage

\section{Introduction}

In~\cite{CerfGorny} and~\cite{GORGaussCase}, we introduced a Curie-Weiss model of self-organized criticality (SOC): we transformed the distribution associated to the generalized Ising Curie-Weiss model by implementing an automatic control of the inverse temperature which forces the model to evolve towards a critical state. It is the model given by an infinite triangular array of real-valued random variables $(X_{n}^{k})_{1\leq k \leq n}$ such that, for all $n \geq 1$, $(X^{1}_{n},\dots,X^{n}_{n})$ has the distribution
\[\frac{1}{Z_{n}}\exp\left(\frac{1}{2}\frac{(x_{1}+\dots+x_{n})^{2}}{x_{1}^{2}+\dots+x_{n}^{2}}\right)\ind{\{x_{1}^{2}+\dots+x_{n}^{2}>0\}}\,\prod_{i=1}^{n}d\r(x_{i}),\]
where $\r$ is a probability measure on $\R$ which is not the Dirac mass at~$0$, and where $Z_n$ is the normalization constant. We extended the study of this model in~\cite{Gorny3},~\cite{GornyThesis} and~\cite{GorVar}. For symmetric distributions satisfying some exponential moment condition, we proved that the sum $S_{n}$ of the random variables behaves as in the typical critical generalized Ising Curie-Weiss model: the fluctuations are of order $n^{3/4}$ and the limiting law is $C \exp(-\lambda x^{4})\,dx$ where $C$ and $\lambda$ are suitable positive constants. Moreover, by construction, the model does not depend on any external parameter. That is why we can conclude it exhibits the phenomenon of self-organized criticality (SOC). Our motivations for studying such a model are detailed in~\cite{CerfGorny}.\medskip

This model describes interacting elements in thermodynamic equilibrium. However self-organized criticality seems to be a dynamical phenomenon, as is highlighted by the archetype of SOC : the sandpile model introduced by Per Bak, Chao Tang and Kurt Wiesenfeld in their seminal 1987 paper~\cite{BTW1f}. That is why, in this paper, we try to design a dynamical Curie-Weiss model of SOC.\medskip

We choose to build a dynamical model as a Markov process whose unique invariant distribution is the law of (a modified version of) the Curie-Weiss model of SOC. One way  of building such a process is to consider the associated Langevin diffusion (see~\cite{RobTweed} for example).\medskip

\textbf{The model.} Let $\phi$ be a $\Ck{2}$ function from $\R$ to $\R$ which is even and such that the function $\exp(2\phi)$ is integrable over $\R$. We suppose that there exists $C>0$ such that
\[\forall x\in \R\qquad x\phi'(x)\leq C(1+x^2).\]
We denote by $\r$ the probability measure with density
\[x\longmapsto \exp(2\phi(x))\,\left(\int_{\R}\exp(2\phi(t))\,dt\right)^{-1}\]
with respect to the Lebesgue measure on $\R$. We consider an infinite triangular array of stochastic processes $(X_{n}^{k}(t),\,t\geq 0)_{1\leq k \leq n}$ such that, for all $n \geq 1$,
\[\smash{\big((X^{1}_{n}(t),\dots,X^{n}_{n}(t)),\,t\geq 0\big)}\]
is the unique solution of the system of stochastic differential equations:
\begin{equation}
\begin{split}
dX_n^j(t)=\phi'\big(X_n^j&(t)\big)\,dt+dB_j(t)\\
&\begin{split}+\frac{1}{2}\bigg(\frac{S_n(t)}{T_n(t)+1}-X_n^j(t)\bigg(\frac{S_n(t)}{T_n(t)+1}\bigg)^2&\bigg)\,dt\\
&j\in \{1,\dots,n\},\end{split}
\end{split}
\tag{$\mathbf{\Sigma}_n^{\phi}$}
\end{equation}
where $(B_1,\dots,B_n)$ is a standard $n$-dimensional Brownian motion and
\[\forall t\geq 0 \qquad S_n(t)=X_n^1(t)+\dots+X_n^n(t),\qquad T_n(t)=\big(X_n^1(t)\big)^2+\dots+\big(X_n^n(t)\big)^2.\]

In section~\ref{generalresults}.\ref{solutions}, we explain in details why we choose this drift. In this paper, we only prove a fluctuation theorem for the Gaussian case of this model:

\begin{theo} Let $\s^2>0$. Assume that
\[\forall x\in \R\qquad \phi(x)=-\frac{x^2}{4\s^2}\]
and that, for any $n\geq 1$, the random variables $X_n^1(0),\dots,X_n^n(0)$ are independent with common distribution $\r=\Nc(0,\s^2)$. We denote $(\Uc(t),\,t\geq 0)$ the unique strong solution of the stochastic differential equation
\begin{equation}
dz(t)=-\frac{z^3(t)}{2\s^4}\,dt+dB(t),\qquad z(0)=0,
\tag{$\mathcal{S}_{\s}$}
\end{equation}
where $(B(t),\,t\geq 0)$ is a standard Brownian motion. Then, for any $T>0$,
\[\displaystyle{\left(\frac{S_n(\sqrt{n}t)}{n^{3/4}},\,0\leq t\leq T\right)\overset{\loi}{\underset{n \to +\infty}{\longrightarrow}}\big(\Uc(t),\,0\leq t\leq T\big)},\]
in the sense of the convergence in distribution on $C([0,T],\R)$.
\label{MainTheoGauss}
\end{theo}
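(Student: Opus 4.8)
The plan is to reduce the $n$-dimensional system $(\mathbf{\S}_n^{\phi})$ to a scalar equation for $S_n$ coupled to the energy $T_n$, and then to exploit a separation of time scales. With $\phi'(x)=-x/(2\s^2)$, summing $(\mathbf{\S}_n^{\phi})$ over $j$ and applying It\^o's formula to $\sum_j (X_n^j)^2$ gives
\begin{align*}
dS_n &= \Big(-\frac{S_n}{2\s^2}+\frac{nS_n}{2(T_n+1)}-\frac{S_n^3}{2(T_n+1)^2}\Big)\,dt+dW_n,\\
dT_n &= \Big(n-\frac{T_n}{\s^2}+\frac{S_n^2}{(T_n+1)^2}\Big)\,dt+dM_n,
\end{align*}
where $W_n=\sum_j B_j$ and $M_n=2\sum_j\int X_n^j\,dB_j$ satisfy $d\langle W_n\rangle=n\,dt$, $d\langle M_n\rangle=4T_n\,dt$ and $d\langle W_n,M_n\rangle=2S_n\,dt$. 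The algebraic heart of the matter is that, when $T_n+1=n\s^2$, the two linear terms $-S_n/(2\s^2)$ and $nS_n/(2(T_n+1))$ cancel exactly and only the cubic drift $-S_n^3/(2n^2\s^4)$ survives: this is the mechanism of criticality. Setting $Y_n(t)=S_n(\sqrt n\,t)/n^{3/4}$ and $G_n(t)=(T_n(\sqrt n\,t)+1-n\s^2)/\sqrt n$, Brownian scaling then yields, after dividing by $n^{3/4}$,
\[
dY_n(t)=\Big(-\frac{Y_n^3(t)}{2\s^4}-\frac{Y_n(t)\,G_n(t)}{2\s^4}+o(1)\Big)\,dt+d\widetilde W_n(t),
\]
with $\widetilde W_n$ a standard Brownian motion and, since $S_n(0)\sim\Nc(0,n\s^2)$, with $Y_n(0)\to 0$ in law. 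Everything hinges on the coupling term $Y_nG_n$.

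A priori estimates come next. The equation for $T_n$ describes a process with a strong linear restoring force toward $n\s^2$ (rate $1/\s^2$) driven by a martingale of intensity $\approx 4n\s^2\,dt$; consequently $G_n$ is, on the accelerated time scale, an Ornstein--Uhlenbeck-type process whose relaxation rate $\sqrt n/\s^2$ diverges while its stationary variance stays bounded (equal to $2\s^4$). I would make this quantitative by applying It\^o to $(T_n-n\s^2)^2$ and Gronwall to get $\sup_{0\le t\le T}\E[(T_n(\sqrt n\,t)-n\s^2)^2]=O(n)$, i.e.\ $\sup_t\E[G_n(t)^2]=O(1)$, the initial $\chi^2$-fluctuation being already of order $\sqrt n$. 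In parallel, the favourable sign of the cubic drift lets one bound $\sup_{0\le t\le T}\E[Y_n(t)^{2p}]<\infty$ for the moments needed below.

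The core of the argument is to show that the coupling term does not survive, namely
\[
\int_0^t \frac{Y_n(s)\,G_n(s)}{2\s^4}\,ds\longrightarrow 0\qquad\text{in probability, as }n\to+\infty.
\]
Here I would use the fast relaxation rather than any pointwise smallness of $G_n$: the rescaled energy equation reads $dG_n=-(\sqrt n/\s^2)G_n\,dt+d\widetilde M_n+o(\cdot)$ with $d\langle\widetilde M_n\rangle\approx 4\s^2\sqrt n\,dt$, so that $G_n\,dt=(\s^2/\sqrt n)(d\widetilde M_n-dG_n)$. Substituting and integrating by parts turns the integral into $(\s^2/\sqrt n)$ times terms each bounded in probability: the stochastic integral $\int Y_n\,d\widetilde M_n$ has bracket $O(\sqrt n)$ and is thus $O_{\P}(n^{1/4})$, while the boundary term $Y_nG_n$ and the correction $\int G_n\,dY_n$ are $O_{\P}(1)$. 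The whole expression is therefore $O_{\P}(n^{-1/4})\to 0$. The asymptotic decorrelation of the driving noises, visible in the correlation $d\langle W_n,M_n\rangle/\!\sqrt{\langle W_n\rangle\langle M_n\rangle}=O(n^{-1/4})$, ensures that $\widetilde W_n$ behaves like a Brownian motion unaffected by these corrections and also controls the cross-variation appearing in the integration by parts.

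Finally, tightness of $(Y_n)$ in $C([0,T],\R)$ follows from the moment bounds via the Kolmogorov criterion, and the computations above identify every limit point $Y$ as a solution of the martingale problem associated with $(\mathcal S_\s)$, i.e.\ $f(Y(t))-f(Y(0))-\int_0^t\big(-\tfrac{y^3}{2\s^4}f'(y)+\tfrac12 f''(y)\big)\big|_{y=Y(s)}\,ds$ is a martingale. Since $(\mathcal S_\s)$ has a unique strong solution $\Uc$ with $\Uc(0)=0$, the martingale problem is well-posed, the limit is unique, and the whole sequence converges to $\Uc$. I expect the main obstacle to be exactly the averaging step of the third paragraph: controlling the fast fluctuations of $T_n$ on the time scale $\sqrt n\,t$ and proving they decouple from $S_n$; the remaining ingredients are a priori estimates, tightness, and the (assumed) well-posedness of $(\mathcal S_\s)$.
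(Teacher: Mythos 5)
Your reduction to the pair $(S_n,T_n)$, the scaling $Y_n=\Ss_n$, $G_n\approx n^{1/4}\Ts_n$, and the drift/bracket computations ($d\langle W_n,M_n\rangle=2S_n\,dt$, relaxation rate $\sqrt{n}/\s^2$, noise intensity $\approx 4\s^2\sqrt{n}$) all check out, and your averaging step is sound in spirit: the substitution $G_n\,dt=(\s^2/\sqrt{n})(d\widetilde{M}_n-dG_n+\cdots)$ followed by integration by parts is exactly the pathwise twin of the paper's perturbed-test-function method (proposition~\ref{GenS_n()}), where the corrector $H_f(x,y)=-\tfrac{xy}{2\s^2}f'(x)$ implements this cancellation at the generator level and $K_f$ absorbs the $y^2$-term, which in your CLT normalization is the harmless $Y_nG_n^2/(2\s^6\sqrt{n})$. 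Working at the scale $\sqrt{n}$ where $G_n=O_{\P}(1)$, instead of the paper's scale $n^{3/4}$ where $\Ts_n$ collapses, is a legitimate and arguably more transparent variant.

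The genuine gap is your a priori bound $\sup_{0\le t\le T}\E[Y_n(t)^{2p}]<\infty$, which you attribute to ``the favourable sign of the cubic drift.'' That sign helps only inside the window $T_n\approx n\s^2$. The exact drift of $Y_n$ is $\sqrt{n}\,\bigl[\tfrac{Y_n}{2}\bigl(\tfrac{n}{T_n+1}-\tfrac{1}{\s^2}\bigr)-\tfrac{n^{3/2}Y_n^3}{2(T_n+1)^2}\bigr]$: on the event $T_n+1=cn\s^2$ with $c\neq 1$, the linear term destabilizes at rate of order $\sqrt{n}$ while the cubic coefficient stays of order $1$, so trajectories can be pushed to size $n^{1/4}$ and uniform moment bounds fail unless you first show such excursions are very unlikely. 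The same caveat infects every ``$o(1)$'' obtained by expanding $n/(T_n+1)$, the BDG bound $O_{\P}(n^{1/4})$ on $\int Y_n\,d\widetilde{M}_n$, and your Kolmogorov-criterion tightness, which consumes precisely the missing moments. (Your Gronwall bound $\sup_t\E[G_n(t)^2]=O(1)$ is fine --- the cross term $2(T_n-n\s^2)S_n^2/(T_n+1)^2$ has the good sign below $n\s^2$ and is at most $\tfrac{2}{\s^2}(T_n-n\s^2)$ above it --- but moment bounds at fixed $t$ do not control $\sup_{t\le T}|G_n(t)|$ on the accelerated time scale without a maximal-inequality argument.) The missing ingredient is localization: introduce the exit time of $(\Ss_n,\Ts_n)$ from a box and prove that the exit probability before $T$ vanishes as $n,k\to+\infty$. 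This is exactly the content of the paper's lemma~\ref{LemArret}, the longest part of its proof: the $T$-coordinate is handled by the collapsing-process proposition~\ref{CollapsingProp} (with Brillinger's moment bound for $\Ts_n(0)$), and the $S$-coordinate by Doob's maximal inequality applied to the stopped martingale of lemma~\ref{Mnf-martingale} attached to the perturbed Lyapunov function $F_{n,f}$ with $f(x)=x^2$, exploiting the sign of $-\Ss_n^4/\s^4$. Once this localization is in place (and noting that well-posedness of the limiting martingale problem is theorem~1.7 of chapter~8 of Ethier--Kurtz, not an assumption), your averaging computation and identification step go through and yield a correct pathwise variant of the paper's proof.
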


This theorem suggests that, at least in the Gaussian case, our dynamical model exhibits self-organized criticality. Indeed it does not depend on any external parameter and the fluctuations of $S_{n}(\,\cdot\,)$ are critical: the processes evolve in a time scale of order $\sqrt{n}$ and in a space scale of $n^{3/4}$ and the limiting process is the solution of the "critical" stochastic differential equation $(\mathcal{S}_{\s})$. This is the same behaviour as in the critical case of the mean-field model studied by Donald~A.~Dawson in~\cite{Dawson}, see section~\ref{strategy}.\ref{empirical} for more details.\medskip

For any $n\geq 1$, we introduce $S_n^{\star}=\xi_n^1+\dots+\xi_n^n$ where $(\xi_n^1,\dots,\xi_n^n)$ has the density proportional to
\[(x_{1},\dots,x_{n})\longmapsto\exp\left(\frac{1}{2}\frac{(x_{1}+\dots+x_{n})^{2}}{x_{1}^{2}+\dots+x_{n}^{2}+1}-\frac{x_{1}^{2}+\dots+x_{n}^{2}}{2\s^2}\right)\]
with respect to the Lebesgue measure on $\R^n$. In this paper, we also prove the following commutative diagram of convergences in distribution on $\R$:
\begin{center}
\begin{tikzpicture}
\draw (0,0) node {$\displaystyle{\frac{S_n(\sqrt{n} t )}{n^{3/4}}}$} ;
\draw [thick] [>=stealth,->] (0,-1) -- (0,-3) ;
\draw (0,-4) node {$\Uc(t)$} ;
\draw [thick] [>=stealth,->] (2.5,0) -- (5,0) ;
\draw (8.5,0) node {$\displaystyle{\frac{S_n^{\star}}{n^{3/4}}}$} ;
\draw [thick] [>=stealth,->] (8.5,-1) -- (8.5,-3) ;
\draw (8.5,-4) node {$\displaystyle{\frac{\sqrt{2}}{\s}\,\G\left(\frac{1}{4}\right)^{-1} \exp\left(-\frac{s^4}{4\s^{4}}\right)\,ds}$} ;
\draw [thick] [>=stealth,->] (2.5,-4) -- (5,-4) ;
\draw (0.25,-2) node [rotate=270]  {$n\to+\infty$} ;
\draw [color=gray](-0.4,-2) node  {$(\Ac_4)$} ;
\draw (8.25,-2) node [rotate=270]  {$n\to+\infty$} ;
\draw [color=gray](8.9,-2) node {$(\Ac_2)$} ;
\draw [color=gray](3.75,0.35) node  {$(\Ac_1)$} ;
\draw (3.75,-0.25) node  {$t\to+\infty$} ;
\draw [color=gray](3.75,-4.35) node  {$(\Ac_3)$} ;
\draw (3.75,-3.75) node  {$t\to+\infty$} ;
\label{Diagram}
\end{tikzpicture}
\end{center} \medskip

In section~\ref{generalresults}, we present some results on the general case of the model and we prove the convergences in distribution associated to the arrows $(\Ac_1)$, $(\Ac_2)$ and $(\Ac_3)$ in the previous diagram. Next, in section~\ref{strategy}, we give the strategy for proving a fluctuation result for our model and we explain that the Gaussian case is special because it can be analyzed through a two-dimensional problem. Finally we prove theorem~\ref{MainTheoGauss} in section~\ref{proof}, i.e., the convergence in distribution associated to the arrow~$(\Ac_4)$.\medskip

\textbf{Acknowledgments.} The author would like to thank the anonymous referee for his careful review and useful comments which helped to improve the presentation of the paper.

\section{Results on the general case of the model}
\label{generalresults}

In this section, we first give general results on Langevin diffusions. Next we apply these results to prove existence and uniqueness of the solution of $(\Sc_{\s})$ and $(\mathbf{\Sigma}_n^{\phi})$. We also prove the convergences in distribution associated to the arrows $(\Ac_1)$ and $(\Ac_3)$. Finally we give a fluctuation theorem for an alternative version of the Curie-Weiss model of SOC.

\subsection{Langevin diffusions}

Let $f$ be a probability density function on $\R^n$, $n\geq 1$. The Langevin diffusion associated to $f$ is a stochastic process which is constructed so that, in continuous time, under suitable regularity conditions, it converges to $f(x)\,dx$, its unique invariant distribution.

\begin{theo} Let $f$ be a positive probability density function on $\R^n$, $n\geq 1$, such that $\ln\!f$ is $\Ck{2}$. We suppose that there exists $K>0$ such that
\[\forall x\in \R^n\qquad \langle\nabla \ln\!f(x),x\rangle\leq K(1+\|x\|^2).\]
If $(B(t),\,t\geq 0)$ is a standard $n$-dimensional Brownian motion and if $\xi$ is a random variable in $\R^n$ satisfying $\smash{\E\big(\|\xi\|^2\big)<+\infty}$, then there exists a unique strong solution to the stochastic differential equation
\begin{equation}
dY(t)=\frac{1}{2}\nabla\ln\!f(Y(t))+dB(t),
\tag{$\Sc_f$}
\end{equation}
with initial condition $Y(0)=\xi$. Moreover $(Y(t),\,t\geq 0)$ is a Markov diffusion process on $\R^n$ admitting $f(x)\,dx$ as unique invariant distribution and
\[\forall x\in \R^n\qquad \lim_{t\to+\infty}\,\sup_{A\in \Bc_{\R^n}}\,\left|\,\P\big(Y(t)\in A \,\big|\,Y(0)=x\big)-\int_Af(z)\,dz\,\right|=0.\]
\label{theoLangevin}
\end{theo}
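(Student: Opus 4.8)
The plan is to treat the three assertions in turn — global existence and pathwise uniqueness, the Markov/invariance property, and convergence to equilibrium — using the non-degeneracy of the noise throughout.

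First I would establish existence and uniqueness of a global strong solution. Since $\ln\!f$ is $\Ck{2}$, the drift $b=\frac12\nabla\ln\!f$ is locally Lipschitz, while the diffusion coefficient is the identity, hence globally Lipschitz; classical SDE theory then yields a unique strong solution up to a possible explosion time $\t_\infty$. To rule out explosion I would use the Lyapunov function $V(x)=1+\|x\|^2$. Writing $\Lc=\frac12\langle\nabla\ln\!f,\nabla\rangle+\frac12\D$ for the generator, one computes $\Lc V(x)=\langle\nabla\ln\!f(x),x\rangle+n$, which by hypothesis is bounded by $K(1+\|x\|^2)+n\leq(K+n)V(x)$. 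A Dynkin/Gronwall argument applied to the stopping times $\t_m=\inf\{t:\|Y(t)\|\geq m\}$ then gives $\E[V(Y(t\wedge\t_m))]\leq V(\x)e^{(K+n)t}$, uniformly in $m$; letting $m\to+\infty$ forces $\t_\infty=+\infty$ almost surely. The moment assumption $\E(\|\x\|^2)<+\infty$ ensures $\E[V(\x)]<+\infty$, so the bound is finite.

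Since the coefficients are time-homogeneous and pathwise uniqueness holds, the solution is a time-homogeneous (strong) Markov diffusion. For invariance of $f(x)\,dx$ I would check that $f$ lies in the kernel of the forward (Fokker--Planck) operator $\Lc^\ast g=-\nabla\cdot(bg)+\frac12\D g$. The key cancellation is $b\,f=\frac12(\nabla\ln\!f)\,f=\frac12\nabla f$, so $\Lc^\ast f=-\frac12\D f+\frac12\D f=0$; equivalently, since $\Lc u=\frac{1}{2f}\nabla\cdot(f\nabla u)$, an integration by parts gives $\int(\Lc u)v\,f\,dx=-\frac12\int\langle\nabla u,\nabla v\rangle\,f\,dx$, so $\Lc$ is symmetric on $\Ll^2(f\,dx)$, the diffusion is reversible, and $f(x)\,dx$ is an invariant probability measure.

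For the convergence to equilibrium I would invoke the ergodic theory of Markov processes (as in \cite{RobTweed}). Because the noise is non-degenerate the semigroup is uniformly elliptic, hence admits a transition density that is smooth and strictly positive on $\R^n$; this makes the process Lebesgue-irreducible, aperiodic and strong Feller. Since we have already exhibited the invariant probability $f(x)\,dx$, the standard Meyn--Tweedie theory guarantees that it is the \emph{unique} invariant distribution, that the process is positive Harris recurrent, and that the total-variation distance $\sup_{A}|\P(Y(t)\in A\mid Y(0)=x)-\int_A f|$ tends to $0$. The strict positivity of the transition density is what upgrades the convergence from $f$-almost every $x$ to \emph{every} $x\in\R^n$, which is the form stated. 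The main obstacle is precisely this last step: obtaining the convergence uniformly over Borel sets and for every starting point, for which one must combine the qualitative criteria (irreducibility and aperiodicity from ellipticity) with the a priori knowledge of the invariant measure, the non-explosion bound of the first step being the ingredient that makes the abstract theorem applicable. By comparison, the existence/uniqueness and the verification of invariance are essentially computational once the drift condition is in hand.
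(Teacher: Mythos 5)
Your proposal is correct, and its skeleton (existence/uniqueness and non-explosion, then invariance, then Meyn--Tweedie ergodic theory) is the same as the paper's; the difference is that the paper outsources every step to the literature while you open the black boxes. The paper's proof consists of three citations: theorems~3.7 and~3.11 of chapter~5 of Ethier--Kurtz for the unique strong solution and the associated martingale problem, theorems~4.1 and~4.2 of chapter~4 for the Markov property and the generator, and theorem~2.1 of Roberts--Tweedie for invariance of $f(x)\,dx$, uniqueness and the total-variation convergence. Your Lyapunov argument with $V(x)=1+\|x\|^2$, giving $\Lc V\leq (K+n)V$ and non-explosion by Gronwall plus the stopping times $\t_m$, is precisely the mechanism inside the Ethier--Kurtz theorems, and it makes visible why the hypothesis $\langle\nabla\ln\!f(x),x\rangle\leq K(1+\|x\|^2)$ is stated in exactly that form; likewise your verification of invariance via $\Lc^{\ast}f=0$ and the reversibility of $\Lc$ on $\Ll^2(f\,dx)$ is made explicit where the paper simply inherits it from Roberts--Tweedie. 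On the final step you and the paper coincide in substance: the Roberts--Tweedie theorem the paper cites is exactly the statement that a non-explosive Langevin diffusion is ergodic, proved by the irreducibility/aperiodicity/strong Feller route you sketch. What each approach buys: yours is self-contained up to that last step; the paper's, besides brevity, records the martingale-problem formulation and the inclusion $C_c^{\infty}(\R^n)\subset D(A_f)$, which it reuses later for the generator computations of section~3. Two small caveats on your side, neither fatal: with $\ln\!f$ only $\Ck{2}$ the drift is $\Ck{1}$, so claiming a \emph{smooth} transition density is an overstatement --- ellipticity here yields a continuous, strictly positive density, which is all that irreducibility, aperiodicity and the strong Feller property require; and infinitesimal invariance ($\Lc^{\ast}f=0$ tested against $C_c^{\infty}$ functions) upgrades to genuine invariance of the semigroup only in combination with non-explosion/well-posedness, which you do have from your first step but should invoke explicitly at that point.
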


\begin{proof} Theorems~3.7 and~3.11 of chapter~5 of~\cite{EKurtz} imply that there exists a unique strong solution to $(\Sc_f)$ with initial condition $\xi$, that its sample path is continuous and that it is a solution of the martingale problem for $(A_f,\xi)$, where
\[\forall g\in C^2(\R^n)\qquad A_fg=\frac{1}{2}\sum_{i=1}^n\frac{\partial^2 g}{\partial x_i^2}+\sum_{i=1}^n\left(\frac{1}{2}\frac{\partial (\ln\!f)}{\partial x_i}\right)\frac{\partial g}{\partial x_i}.\]
Next, theorems~4.1 and~4.2 of chapter~4 of~\cite{EKurtz} imply that it is a Markov process and that its generator is $(A_f,D(A_f))$ with $C_c^{\infty}(\R^n)\subset D(A_f)$. Finally theorem~2.1~of~\cite{RobTweed} gives us the uniqueness of the invariant distribution and the total variation norm convergence.\end{proof}\medskip

Notice that this theorem is true if we remove the hypothesis that $\xi$ has a finite second order moment, but the solution to $(\Sc_f)$ would be weak (see theorem~3.10 of chapter~5 of~\cite{EKurtz}).

\subsection{Solution of $(\Sc_{\s})$}

Theorem~\ref{theoLangevin} implies that $(\Sc_{\s})$ admits a unique strong solution $(\Uc(t),\,t\geq 0)$ which is a Markov process whose unique invariant distribution is
\[\frac{\sqrt{2}}{\s}\,\G\left(\frac{1}{4}\right)^{-1} \exp\left(-\frac{s^4}{4\s^{4}}\right)\,ds.\]
Moreover
\[\lim_{t\to+\infty}\,\sup_{A\in \Bc_{\R}}\,\left|\,\P\big(\Uc(t)\in A \big)-\frac{\sqrt{2}}{\s}\,\G\left(\frac{1}{4}\right)^{-1}\int_{A}\exp\left(-\frac{s^4}{4\s^{4}}\right)\,ds\,\right|=0.\]
This is the convergence in distribution associated to the arrow $(\Ac_3)$ in the diagram on page~\pageref{Diagram}.

\subsection{Solution of $(\mathbf{\Sigma}_n^{\phi})$}
\label{solutions}

%
In this subsection, we prove that $(\mathbf{\Sigma}_n^{\phi})$ has a unique strong solution and that the convergence in distribution associated to $(\Ac_1)$ is true.\medskip

Let us define $\widetilde{\mu}^{\star}_{n,\r}$, the probability measure with density
\begin{equation}
f^{\star}_{n,\r}:y\in \R^n\longmapsto\frac{1}{Z_n^{\star}}\exp\left(\frac{1}{2}\frac{\left(y_1+\dots+y_n\right)^2}{y_1^2+\dots+y_n^2+1}+2\sum_{i=1}^n\phi(y_i)\right)
\label{altCWSOC}
\end{equation}
with respect to the Lebesgue measure on $\R^n$, where $Z_n^{\star}$ is a normalization constant. Let us prove that $(\mathbf{\Sigma}_n^{\phi})$ admits a unique solution. For any $y\in \R^n$, we denote
\[S_n[y]=y_1+\dots+y_n,\qquad T_n[y]=y_1^2+\dots+y_n^2\]
and we notice that, for any $j\in \{1,\dots,n\}$,
\[\frac{\partial}{\partial y_j}\left(\frac{1}{2}\frac{\left(S_n[y]\right)^2}{T_n[y]+1}+2\sum_{i=1}^n\phi(y_i)\right)=\frac{S_n[y]}{T_n[y]+1}-y_j\left(\frac{S_n[y]}{T_n[y]+1}\right)^2+2\phi'(y_j).\]
Therefore the system $(\mathbf{\Sigma}_n^{\phi})$~can be rewritten
\[dX_n(t)=\frac{1}{2}\nabla \ln\!f^{\star}_{n,\r}(X_n(t))+dB(t),\]
where $B=(B_1,\dots,B_n)$. As a consequence, the solution of $(\mathbf{\Sigma}_n^{\phi})$ (if it exists) is the Langevin diffusion associated to $f^{\star}_{n,\r}$.\medskip

Let us introduce the operator $L_n$ on $C^2(\R^n)$ such that, for any $f\in C^2(\R^n)$ and $y \in \R^n$,
\[L_nf(y)=\frac{1}{2}\sum_{j=1}^n\frac{\partial^2 f(y)}{\partial y_j^2}+\sum_{j=1}^n\left(\frac{1}{2}\frac{S_n[y]}{T_n[y]+1}-\frac{y_j}{2}\left(\frac{S_n[y]}{T_n[y]+1}\right)^2\!\!+\phi'(y_j)\right)\frac{\partial f(y)}{\partial y_j}.\]

\begin{theo} For any $n\geq 1$, there exists a unique strong solution
\[\big(X_n(t),\,t\geq 0\big)=\big((X_n^1(t),\dots,X_n^n(t)),\,t\geq 0\big)\] to the system $(\mathbf{\Sigma}_n^{\phi})$~with initial condition $X_n(0)$ having a finite second moment. Moreover it is a Markov diffusion process on~$\R^n$ with infinitesimal generator $(L_n,D(L_n))$, where $C^{\infty}_c(\R^n)\subset D(L_n)$, and whose unique invariant distribution is $\widetilde{\mu}^{\star}_{n,\r}$. Finally
\[\forall x\in \R^n\qquad \lim_{t\to+\infty}\,\sup_{A\in \Bc_{\R^n}}\,\left|\,\P\big(X_n(t)\in A \,\big|\,X_n(0)=x\big)-\widetilde{\mu}^{\star}_{n,\r}(A)\,\right|=0.\]
\label{ExistenceSys}
\end{theo}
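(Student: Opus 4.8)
The plan is to reduce everything to the abstract Langevin theorem (Theorem~\ref{theoLangevin}) by exhibiting $(\mathbf{\Sigma}_n^{\phi})$ as the diffusion $(\Sc_f)$ for the explicit density $f = f^{\star}_{n,\r}$. The algebraic identity preceding the statement already shows that the drift of $(\mathbf{\Sigma}_n^{\phi})$ equals $\tfrac{1}{2}\nabla \ln\!f^{\star}_{n,\r}$, so once I verify that $f^{\star}_{n,\r}$ satisfies the hypotheses of Theorem~\ref{theoLangevin}, existence/uniqueness of the strong solution, the Markov and generator statements, and the total-variation convergence to the invariant law all follow at once.

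First I would check that $f^{\star}_{n,\r}$ is a genuine positive probability density on $\R^n$ with $\ln\!f^{\star}_{n,\r}\in \Ck{2}$. Positivity and $\Ck{2}$-smoothness are clear: the quadratic form $(S_n[y])^2/(T_n[y]+1)$ is smooth everywhere (the $+1$ keeps the denominator bounded away from $0$), and $\phi$ is $\Ck{2}$ by hypothesis. Integrability of $f^{\star}_{n,\r}$ reduces to integrability of $\exp(2\sum_i \phi(y_i))=\prod_i \exp(2\phi(y_i))$, which holds because $\exp(2\phi)$ is integrable on $\R$ by assumption, after noting that the Curie-Weiss factor is bounded: indeed $0\leq (S_n[y])^2/(T_n[y]+1)\leq (S_n[y])^2/T_n[y]\leq n$ by Cauchy--Schwarz, so that factor lies in $[1,e^{n/2}]$ and only affects the normalisation $Z_n^{\star}$.

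The one genuine hypothesis to verify is the drift condition $\langle \nabla \ln\!f^{\star}_{n,\r}(y),y\rangle \leq K(1+\|y\|^2)$. Writing $\ln\!f^{\star}_{n,\r}(y)=\tfrac12 (S_n[y])^2/(T_n[y]+1)+2\sum_i\phi(y_i)-\ln Z_n^{\star}$, I would bound the two contributions separately. For the $\phi$-part, Euler's relation gives $\langle \nabla(2\sum_i\phi(y_i)),y\rangle=2\sum_i y_i\phi'(y_i)\leq 2C\sum_i(1+y_i^2)=2C(n+\|y\|^2)$ by the standing hypothesis $x\phi'(x)\leq C(1+x^2)$. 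For the Curie-Weiss part, I would compute $\langle \nabla g,y\rangle$ where $g(y)=\tfrac12(S_n[y])^2/(T_n[y]+1)$; using the partial derivatives already recorded in the excerpt, $\langle \nabla g,y\rangle=\sum_j y_j\big(S_n[y]/(T_n[y]+1)-y_j(S_n[y]/(T_n[y]+1))^2\big)=(S_n[y])^2/(T_n[y]+1)-(S_n[y]/(T_n[y]+1))^2 T_n[y]$, and each term is bounded in absolute value by a constant times $1$ (again via Cauchy--Schwarz and $T_n[y]\leq T_n[y]+1$), uniformly in $y$. Summing, the drift condition holds with some $K$ depending on $n$, $C$, which is all that is required. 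The main obstacle is therefore purely bookkeeping: getting clean uniform bounds on the Curie-Weiss term and its gradient despite the nonlinearity, where the crucial structural point is that the $+1$ regularisation guarantees boundedness of the whole interaction term and its radial derivative.

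With the hypotheses verified, I would invoke Theorem~\ref{theoLangevin} with $f=f^{\star}_{n,\r}$ and initial condition $\xi=X_n(0)$ (which has finite second moment by assumption), reading off: the unique strong solution, continuity of paths, the Markov property, the generator $(L_n,D(L_n))$ with $C_c^{\infty}(\R^n)\subset D(L_n)$ (here $L_n=A_{f^{\star}_{n,\r}}$, matching the operator defined just before the statement), uniqueness of the invariant distribution $\widetilde{\mu}^{\star}_{n,\r}$, and the total-variation convergence. Identifying the explicit $L_n$ with the abstract $A_f$ is immediate from the gradient computation, completing the proof.
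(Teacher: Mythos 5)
Your proposal is correct and follows essentially the same route as the paper: both reduce the statement to Theorem~\ref{theoLangevin} applied to $f^{\star}_{n,\r}$, verifying the drift condition by bounding $2\sum_j x_j\phi'(x_j)$ with the standing hypothesis on $\phi$ and controlling the Curie--Weiss radial term via Cauchy--Schwarz (the paper combines the two interaction terms into the single fraction $(S_n[x])^2/(T_n[x]+1)^2\leq n$, whereas you bound each of them by $n$ separately, an immaterial difference). Your explicit check that $Z_n^{\star}<+\infty$, so that $f^{\star}_{n,\r}$ is indeed a probability density, is a small detail the paper leaves implicit.
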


If we take $\phi(x)=-x^2/(4\s^2)$ for any $x\in \R$, then theorem~\ref{ExistenceSys} proves the convergence in distribution associated to the arrow $(\Ac_1)$ in the diagram on page~\pageref{Diagram}.\medskip

\begin{proof} Let $n\geq 1$. By hypothesis, there exists $C>0$ such that
\[\forall x\in \R\qquad x\phi'(x)\leq C(1+x^2).\]
Moreover $\phi$ is $\Ck{2}$ on $\R$ thus the function $\ln\!f^{\star}_{n,\r}$ is $\Ck{2}$ on $\R^n$. For any $x\in \R^n$, we have
\begin{align*}
\langle \nabla \ln\!f^{\star}_{n,\r}(x),x\rangle&=\sum_{j=1}^n x_j\, \frac{\partial}{\partial x_j}\left(\frac{1}{2}\frac{\left(S_n[x]\right)^2}{T_n[x]+1}+2\sum_{i=1}^n\phi(x_i)\right)\\
&=\sum_{j=1}^n x_j\left(\frac{S_n[x]}{T_n[x]+1}-x_j\left(\frac{S_n[x]}{T_n[x]+1}\right)^2+2\phi'(x_j)\right)\\
&=\frac{(S_n[x])^2}{T_n[x]+1}-\frac{T_n[x](S_n[x])^2}{(T_n[x]+1)^2}+2\sum_{j=1}^nx_j\phi'(x_j)\\
&\leq \frac{(S_n[x])^2}{(T_n[x]+1)^2}+2C(n+\|x\|^2).
\end{align*}
Next the convexity of $t\longmapsto t^2$ on $\R$ implies that
\[\forall y\in \R^n\qquad \frac{(S_n[y])^2}{(T_n[y]+1)^2}\leq \frac{n T_n[y]}{(T_n[y]+1)^2}\leq n,\]
since $T_n[\cdot] \leq (T_n[\cdot]+1)^2$. Therefore $f^{\star}_{n,\r}$ satisfies the hypothesis of theorem~\ref{theoLangevin} and theorem~\ref{ExistenceSys} follows.
\end{proof}\medskip

Remark: we have chosen to built our dynamical model so that $\widetilde{\mu}^{\star}_{n,\r}$ is its unique invariant distribution. It is an alternative version of the Curie-Weiss model we designed in~\cite{CerfGorny}, given by the distribution
\[d\widetilde{\mu}_{n,\r}(x_1,\dots,x_n)=\frac{1}{Z_n}\exp\left(\frac{1}{2}\frac{(x_1+\dots+x_n)^2}{x_1^2+\dots+x_n^2}+2\sum_{i=1}^n\phi(x_i)\right)\,dx_1\,\cdots\,dx_n,\]
where $Z_n$ is a normalization constant. If we want to built the Langevin diffusion associated to the density of $\widetilde{\mu}_{n,\r}$, we obtain the system of stochastic differential equations
\begin{multline*}
dX_n^j(t)=\phi'\big(X_n^j(t)\big)\,dt+dB_j(t)+\frac{1}{2}\left(\frac{S_n(t)}{T_n(t)}-X_n^j(t)\left(\frac{S_n(t)}{T_n(t)}\right)^2\right)\,dt,\\
j\in \{1,\dots,n\}.
\end{multline*}
In this case, the interaction function is not Lipschitz and we have to check first that $T_n(t)\neq 0$ for any $t\geq 0$: this would create technical difficulties to prove existence and uniqueness of a solution. In the next section, we give some results on the alternative version of the Curie-Weiss model of SOC (the model defined by the probability measure $\widetilde{\mu}^{\star}_{n,\r}$ -- see formula~(\ref{altCWSOC})).

\subsection{The alternative Curie-Weiss model of SOC}

Let $\r$ be a probability measure on $\R$. We consider an infinite triangular array of real-valued random variables $(\xi_{n}^{k})_{1\leq k \leq n}$ such that for all $n \geq 1$, $(\xi^{1}_{n},\dots,\xi^{n}_{n})$ has the distribution
\begin{equation}
d\widetilde{\mu}^{\star}_{n,\r}(x_{1},\dots,x_{n})=\frac{1}{Z_{n}^{\star}}\exp\left(\frac{1}{2}\frac{(x_{1}+\dots+x_{n})^{2}}{x_{1}^{2}+\dots+x_{n}^{2}+1}\right)\prod_{i=1}^{n}d\r(x_{i}),
\label{altCWSOC2}
\end{equation}
where $Z_{n}^{\star}$ in the normalization constant. We define $S^{\star}_{n}=\xi^{1}_{n}+\dots+\xi^{n}_{n}$.\medskip

We obtain the same fluctuation theorem as in~\cite{Gorny3}. We only present the case where $\r$ has a density:

\begin{theo} Let $\r$ be a probability measure having an even density with respect to the Lebesgue measure on $\R$ and such that
\[\exists v_{0}>0 \qquad \int_{\R}e^{v_{0}z^{2}}\,d\r(z)<+\infty.\]
If $\s^2$ denotes the variance of $\r$ and $\mu_4$ its fourth moment then, under $\widetilde{\mu}^{\star}_{n,\r}$,
\[\frac{S^{\star}_n}{n^{3/4}}\overset{\mathscr{L}}{\underset{n \to \infty}{\longrightarrow}} \left(\frac{4\mu_{4}}{3\s^{8}}\right)^{1/4}\G\left(\frac{1}{4}\right)^{-1} \exp\left(-\frac{\mu_{4}s^4}{12 \s^{8}}\right)\,ds.\]
\label{TheoremCWSOCalt}
\end{theo}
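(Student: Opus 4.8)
The plan is to analyze the fluctuations of $S_n^\star$ under $\widetilde\mu_{n,\r}^\star$ by the same route as in the critical generalized Ising Curie--Weiss model, namely by linearizing the quadratic interaction via a Hubbard--Stratonovich (Gaussian) transformation and then performing a Laplace-type asymptotic analysis. Let me think about how to carry this out.

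First I would rewrite the interaction term. The presence of the $+1$ in the denominator is the essential difference between $\widetilde\mu_{n,\r}^\star$ and the original model $\widetilde\mu_{n,\r}$, and I expect it to be asymptotically negligible for the fluctuation statement. The natural strategy is to introduce an auxiliary Gaussian variable to decouple the square $(x_1+\dots+x_n)^2$. Writing $S_n^\star = \xi_n^1+\dots+\xi_n^n$ and $T_n^\star = (\xi_n^1)^2+\dots+(\xi_n^n)^2$, the density factorizes once we condition on the value of $T_n^\star$: given $T_n^\star$, the weight is $\exp\big(\tfrac12 (S_n^\star)^2/(T_n^\star+1)\big)$ times a product measure. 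Applying the identity $\exp(a^2/2) = \int_\R \exp(au - u^2/2)\,du/\sqrt{2\pi}$ with $a = S_n^\star/\sqrt{T_n^\star+1}$ turns the quadratic coupling into a linear one at the cost of an integral over an auxiliary variable. After rescaling this auxiliary variable appropriately (I anticipate a scale of order $n^{1/4}$, matching the $n^{3/4}$ scaling of $S_n^\star$ and the law of large numbers $T_n^\star \approx n\s^2$), the problem reduces to a Laplace analysis of a one-dimensional integral whose exponent has a vanishing quadratic part at criticality.

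Next I would make the criticality explicit. The key algebraic fact, already central to \cite{Gorny3}, is that for the measure $\r$ with variance $\s^2$, the quantity $\int_\R z^2\,d\r(z) = \s^2$ makes the effective inverse temperature exactly critical: the quadratic coefficient in the Laplace exponent cancels, and the leading surviving term is quartic, with coefficient governed by the fourth moment $\mu_4$. Concretely, expanding the cumulant generating function of $\r$ to fourth order produces a term $-\tfrac{\mu_4}{12\s^8}s^4$ (after inserting the correct normalizations), which is precisely the exponent in the stated limiting density. I would therefore compute the characteristic function of $S_n^\star/n^{3/4}$, substitute the Hubbard--Stratonovich representation, and identify the $n\to\infty$ limit by dominated convergence, controlling the product $\prod_{i=1}^n \E_\r[\exp(\cdots)]$ through a uniform Taylor expansion.

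The main obstacle will be justifying the interchange of limits and the tightness/domination needed to pass from the finite-$n$ integral to the limiting quartic density, especially handling the auxiliary-variable integral uniformly in $n$. Two points require care: controlling the contribution of configurations where $T_n^\star$ deviates from $n\s^2$ (handled by the exponential moment hypothesis $\int e^{v_0 z^2}\,d\r < \infty$, which also guarantees that all moments, in particular $\mu_4$, are finite and that Laplace's method applies with a Gaussian-type tail bound), and showing that replacing $T_n^\star+1$ by $T_n^\star$ costs only a lower-order correction. Since this is exactly the fluctuation statement of \cite{Gorny3} with the harmless modification of the $+1$ in the denominator, I would present the argument as a reduction: verify that the $+1$ perturbation does not affect the leading-order Laplace asymptotics, and then invoke the computation of \cite{Gorny3} to conclude that $S_n^\star/n^{3/4}$ converges in distribution to the density $\big(4\mu_4/(3\s^8)\big)^{1/4}\G(1/4)^{-1}\exp\big(-\mu_4 s^4/(12\s^8)\big)\,ds$.
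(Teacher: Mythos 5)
There is a genuine gap, and it sits at the very center of your argument: the decoupling step. Conditioning the product measure $\rho^{\otimes n}$ on the value of $T_n^{\star}=(\xi_n^1)^2+\dots+(\xi_n^n)^2$ does \emph{not} leave a product measure --- independence is destroyed --- so the factorization $\prod_{i=1}^n \E_\rho[\exp(\cdots)]$ that your ``uniform Taylor expansion'' is supposed to control never becomes available. If instead you apply the Gaussian identity without conditioning, with $a=S_n^{\star}/\sqrt{T_n^{\star}+1}$, the linearized exponent $u\,S_n^{\star}/\sqrt{T_n^{\star}+1}$ still contains the random denominator and hence still couples all $n$ coordinates. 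This self-normalization is precisely the known obstruction that makes the Hubbard--Stratonovich route, which works for the classical Ising Curie--Weiss model, unavailable for this model, and it is why the actual proof is genuinely two-dimensional: the paper delegates to section~18.b) of~\cite{GornyThesis}, which adapts the proof of theorem~1 of~\cite{Gorny3} --- a joint Cram\'er/Laplace analysis of the pair $(S_n^{\star},T_n^{\star})$ under two-parameter exponential tilts $\exp(ux+vx^2)$ (this is where the hypothesis $\int e^{v_0z^2}d\rho<+\infty$ enters) --- with the sole change of replacing $F(x,y)=x^2/(2y)$ by $(x,y)\longmapsto x^2/(2y+2/n)$. Your closing remark, that the $+1$ is a harmless perturbation and one should then invoke~\cite{Gorny3}, does match the paper's one-line description of the adaptation; it is the route you propose for the Laplace asymptotics themselves that fails.

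The constant exposes the problem concretely. The fourth-order coefficient of the cumulant generating function of $\rho$ is the fourth \emph{cumulant} $\mu_4-3\sigma^4$, not the fourth moment $\mu_4$, so the one-dimensional computation you sketch (auxiliary variable at scale $n^{1/4}$, with $T_n^{\star}$ effectively frozen at its law-of-large-numbers value $n\sigma^2$) produces a quartic exponent proportional to $\mu_4-3\sigma^4$. In the Gaussian case this vanishes identically, whereas the theorem's limit, with $\mu_4=3\sigma^4$, is the perfectly nondegenerate density proportional to $\exp(-s^4/(4\sigma^4))$ appearing in the paper's diagram. The coefficient $\mu_4/(12\sigma^8)$ is created by the fluctuations of $T_n^{\star}$ around $n\sigma^2$ and their coupling to $S_n^{\star}$ through the expansion of $(S_n^{\star})^2/\big(2(T_n^{\star}+1)\big)$; no one-dimensional reduction that averages out $T_n^{\star}$ first can see it. So beyond the failed factorization, your sketch would, if pushed through, yield a wrong limit: the two-dimensional expansion at $(0,\sigma^2)$ is not a technicality but the heart of the proof being adapted.
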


The proof of this theorem is given in section~18.b)~of~\cite{GornyThesis}. It is an adaptation of the proof of theorem~1 of~\cite{Gorny3}, which consists in replacing the function~$F$ by the function $(x,y)\longmapsto x^2/(2y+2/n)$.\medskip

If we take $\phi(x)=-x^2/(4\s^2)$ for any $x\in \R$, then theorem~\ref{TheoremCWSOCalt} implies the convergence in distribution associated to the arrow $(\Ac_2)$ in the diagram on page~\pageref{Diagram}.

\section{Strategy of proof}
\label{strategy}

In this section, we first explain that the main ingredient for proving a fluctuation theorem for our dynamical model (in the case of a general function) will be the study of its associated empirical process. Next we will focus only on the Gaussian case, i.e., when $\phi:x\longmapsto-x^2/(4\s^2)$ for some $\s^2>0$. Indeed we will see that the Gaussian case can be handled by studying the convergence of the process
\[\left(\left(\frac{S_n(\sqrt{n}t)}{n^{3/4}},n^{1/4}\left(\frac{T_n(\sqrt{n}t)}{n}-\s^2\right)\right),\,t\geq 0\right).\]
We compute the generator of this process in subsection~\ref{generator}. Finally we give the sketch of proof of theorem~\ref{MainTheoGauss} in subsection~\ref{sketch}.

\subsection{The empirical process}
\label{empirical}

Let $\phi$ be such that $\mathbf{\Sigma}_n^{\phi}$ has a unique strong solution $\big((X_n^1(t),\dots,X_n^n(t)),\,t\geq 0\big)$. As in the equilibrium case (i.e., the alternative Curie-Weiss model defined in formula~(\ref{altCWSOC}) or~(\ref{altCWSOC2})), we would like to study the process $(S_n,T_n)$. However it is not Markov a priori, contrary to the empirical measure process $M_n$. It is the process taking its values on $\Mc_1(\R)$ and defined by
\[\forall t \geq 0\quad\forall A\in \Bc_{\R} \qquad M_n(t,A)=\frac{1}{n}\sum_{k=1}^n \d_{X_n^k(t)}(A)=\frac{1}{n}\sum_{k=1}^n \ind{A}\big(X_n^k(t)\big),\]
where $\smash{\big((X^{1}_{n}(t),\dots,X^{n}_{n}(t)),\,t\geq 0\big)}$ is the unique solution of~$(\mathbf{\Sigma}_n^{\phi})$. 

\begin{lem} If the distribution of $X_n(0)$ is invariant under permutation of coordinates, then $(M_n(t,\cdot),t \geq 0)$ is a Markov diffusion process on~$\Mc_1(\R)$.
\end{lem}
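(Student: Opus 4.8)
The plan is to realise $(M_n(t,\cdot))$ as the image of the labelled diffusion under the permutation-invariant empirical-measure map, and then to invoke a lumping criterion for functions of Markov processes. Write $\Pi:\R^n\to\Mc_1(\R)$, $\Pi(x)=\frac1n\sum_{k=1}^n\d_{x_k}$, so that $M_n(t,\cdot)=\Pi(X_n(t))$ where $(X_n(t),\,t\ge 0)$ is the labelled Markov diffusion of theorem~\ref{ExistenceSys}. The fibres of $\Pi$ are exactly the orbits of the symmetric group $\mathfrak S_n$ acting by permutation of coordinates ($\Pi(x)=\Pi(x')$ iff $x'$ is a permutation of $x$), so the $\Pi$-measurable functions are precisely the permutation-invariant ones and everything reduces to checking that the dynamics commute with this action. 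This is where the structure of $(\mathbf{\Sigma}_n^{\phi})$ is used: setting $r(x)=S_n[x]/(T_n[x]+1)$, which is permutation-invariant, the $j$-th drift is $b_j(x)=\phi'(x_j)+\frac12 r(x)-\frac{x_j}{2}r(x)^2$, a function of $x_j$ and $r(x)$ alone, while the diffusion matrix is the identity. Hence $b_i(\sigma\cdot x)=b_{\sigma^{-1}(i)}(x)$, i.e.\ $b(\sigma\cdot x)=\sigma\cdot b(x)$, and $(\sigma\cdot B_1,\dots,\sigma\cdot B_n)$ is again a standard Brownian motion; by the strong uniqueness in theorem~\ref{ExistenceSys} the law of $\sigma\cdot X_n$ started from $\sigma\cdot x$ equals that of $X_n$ started from $x$.

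Consequently the semigroup $(P_t^n)$ of $X_n$ commutes with the action $U_\sigma f=f(\sigma^{-1}\cdot\,)$. If $F\circ\Pi$ is a bounded $\Pi$-measurable function, then $U_\sigma\big(P_t^n(F\circ\Pi)\big)=P_t^n\big(U_\sigma(F\circ\Pi)\big)=P_t^n(F\circ\Pi)$, so $P_t^n(F\circ\Pi)$ is again permutation-invariant and therefore factors as $(Q_t^n F)\circ\Pi$ for a uniquely determined $Q_t^nF$ on $\Pi(\R^n)$. Dynkin's criterion for functions of a Markov process then gives that $M_n=\Pi(X_n)$ is a time-homogeneous Markov process with semigroup $(Q_t^n)$. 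The exchangeability hypothesis on $X_n(0)$ is precisely the initial-condition compatibility (in the Rogers--Pitman intertwining sense) that $X_n(0)$ is the uniform relabelling of $M_n(0)$, so that the law of $M_n(0)$ is the pushforward $\Pi_\ast$ of the law of $X_n(0)$ and the dynamics are consistently lifted to $\Mc_1(\R)$. Continuity of the paths of $M_n$ in the weak topology is inherited from the continuity of the paths of $X_n$ and of $\Pi$.

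To confirm the diffusion character and exhibit the generator, I would apply It\^o's formula to cylinder functionals $F(\mu)=G(\langle\mu,\psi_1\rangle,\dots,\langle\mu,\psi_m\rangle)$ with $\psi_i\in C_b^2(\R)$ and $G\in C^2$. Writing $\iota(x)=x$ and using $S_n(t)=n\langle M_n(t),\iota\rangle$ and $T_n(t)=n\langle M_n(t),\iota^2\rangle$, the drift of $\langle M_n(t),\psi\rangle$ equals $\langle M_n(t),\phi'\psi'+\tfrac12\psi''\rangle+\tfrac12 r_n\langle M_n(t),\psi'\rangle-\tfrac12 r_n^2\langle M_n(t),\iota\psi'\rangle$ with $r_n=n\langle M_n(t),\iota\rangle/(n\langle M_n(t),\iota^2\rangle+1)$, while the brackets satisfy $d\langle\langle M_n,\psi\rangle,\langle M_n,\chi\rangle\rangle_t=\frac1n\langle M_n(t),\psi'\chi'\rangle\,dt$. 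Both are functions of $M_n(t)$ alone, which re-derives the Markov property through a martingale problem on $\Mc_1(\R)$ and identifies $(Q_t^n)$ as a diffusion semigroup.

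The main obstacle I expect is the clean application of the lumping criterion rather than any single computation: one must verify that the descended semigroup $(Q_t^n)$ is genuinely well defined on the image $\Pi(\R^n)$ (the atomic measures with masses in $\frac1n\N$, a closed subset of $\Mc_1(\R)$) and match the initial law, which is exactly where exchangeability enters. The only analytic care needed is that the possibly unbounded drift $\phi'$ does not spoil the It\^o computation; this is controlled as in theorem~\ref{ExistenceSys} by the growth bound $x\phi'(x)\le C(1+x^2)$ together with the boundedness of $r(x)$ and hence of the interaction terms.
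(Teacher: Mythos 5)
Your proof is correct and follows essentially the same route as the paper, which does not spell out an argument but defers to Lemma~2.3.1 of Dawson's paper: there, as in your write-up, the permutation-equivariance of the drift in $(\mathbf{\Sigma}_n^{\phi})$ together with pathwise uniqueness yields a semigroup commuting with the $\mathfrak{S}_n$-action, the Markov property of $M_n=\Pi(X_n)$ descends by the lumping criterion, and the diffusion character is read off from It\^o's formula on cylinder functionals. The only cosmetic remark is that your strong (Dynkin-type) lumpability argument actually gives the Markov property for any initial law, so the exchangeability hypothesis enters only through the initial-condition bookkeeping, exactly as you note.
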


This lemma has a similar proof than lemma~2.3.1~of the article~\cite{Dawson} -- a paper by Donald A. Dawson about a mean-field model of cooperative behaviour. Dawson's model is defined through a Markov process which is solution of a system of stochastic differential equations. This process depends on two parameters and Dawson proves the existence of a critical curve in the space of the parameters. The critical fluctuations of the empirical measure process $M_{n}(\,\cdot\,)$ evolve in a time scale of order $\sqrt{n}$ and in a space scale of order $n^{3/4}$. We believe that our dynamical model has the same asymptotic behavior for the following reasons:\smallskip

$\star$ The invariant distribution of Dawson's process is a particular case of the law of the generalized Ising Curie-Weiss model, defined in~\cite{EN}. \smallskip


$\star$ The alternative Curie-Weiss model of SOC, defined in formula~(\ref{altCWSOC}) or~(\ref{altCWSOC2}), has the same asymptotic behavior as the critical generalized Ising Curie-Weiss model (see theorem~\ref{TheoremCWSOCalt}).\smallskip 

$\star$ The invariant distribution of our dynamical model is the law of the alternative Curie-Weiss model (see theorem~\ref{ExistenceSys}).\medskip

%
%
%
%
%
%

Let $n\geq 1$. As in Dawson's paper, we define the process $U_n$ by
\[\forall t \geq 0\quad\forall A\in \Bc_{\R} \qquad U_n(t,A)=n^{1/4}\bigg(M_n(\sqrt{n}t,A)-\int_A d\r(x)\bigg).\] 
It takes its values on $\Mc^{\pm}(\R)$, the space of signed measures on $\R$.\medskip

The convergence of a sequence of Markov processes can be proved through the convergence of the sequence of their generators. Let us denote by $G_n$ the infinitesimal generator of $U_n$. Let $f$ and $\Phi$ belong to $C^2(\R)$. We assume that $\Phi$ is $\r$-integrable. We have
\[\forall t\geq 0 \qquad G_nf\left(\int_{\R}\Phi(z)\,U_n(t,dz)\right)=\sqrt{n}L_nF_{f,\Phi}\big(X^{1}_{n}(t),\dots,X^{n}_{n}(t)\big)\]
where
\[F_{f,\Phi} :x\in \R^n\longmapsto f\left(n^{1/4}\left(\frac{1}{n}\sum_{k=1}^n\Phi(x_k)-\int_{\R}\Phi(z)\,d\r(z)\right)\right).\]
If $\Phi : z\longmapsto z$ then, for any $i\in \{1,\dots,n\}$ and $x\in \R^n$,
\[\frac{\partial F_{f,\Phi}}{\partial x_i}(x)=\frac{1}{n^{3/4}}F_{f',\Phi}(x)\qquad \mbox{and}\qquad \frac{\partial^2 F_{f,\Phi}}{\partial x_i^2}(x)=\frac{1}{n^{3/2}}F_{f'',\Phi}(x).\]
If $\Phi : z\longmapsto z^2$ then, for any $i\in \{1,\dots,n\}$ and $x\in \R^n$,
\[\frac{\partial F_{f,\Phi}}{\partial x_i}(x)=\frac{2x_i}{n^{3/4}}F_{f',\Phi}(x)\quad \mbox{and}\quad \frac{\partial^2 F_{f,\Phi}}{\partial x_i^2}(x)=\frac{4x_i^2}{n^{3/2}}F_{f'',\Phi}(x)+\frac{2}{n^{3/4}}F_{f',\Phi}(x).\]
In both cases, if we suppose that $\phi : z\longmapsto -z^2/(4\s^2)$, then we notice that, for any $x\in \R^n$, the term $L_nF_{f,\Phi}(x)$ only depends on $n$, $S_n[x]$ and $T_n[x]$. This suggests that, in the Gaussian case, in order to prove the convergence of the process $\big(S_n(\sqrt{n}t)/n^{3/4},\,t\geq 0\big)$, we can turn the study of $U_n$ (which is a problem in infinite dimensions) into a problem in only two dimensions. Indeed, we introduce the processes $\Ss_n$ and $\Ts_n$ defined by
\[\forall t\geq 0 \qquad \Ss_n(t)=\frac{S_n(\sqrt{n}t)}{n^{3/4}}=\int_{\R}z\,U_n(t,dz)\]
and
\[\forall t\geq 0 \qquad \Ts_n(t)=n^{1/4}\left(\frac{T_n(\sqrt{n}t)}{n}-\s^2\right)=\int_{\R}z^2\,U_n(t,dz).\]

In the rest of the paper, we suppose that $\phi(x)=-x^2/(4\s^2)$ for any $x\in \R$.

\subsection{Generator of $(\Ss_n,\Ts_n)$ in the Gaussian case}
\label{generator}

Let $n\geq 1$ and $f\in \Ck{2}(\R^2)$. Let us define $\Psi_f$ on $\R^n$ by
\[\forall x\in \R^n\qquad \Psi_f(x)=f\left(\frac{S_n[x]}{n^{3/4}},\frac{T_n[x]}{n^{3/4}}-n^{1/4}\s^2\right).\]

\begin{prop} For any $n\geq 1$ and $f \in C^2(\R^2)$, we have
\[\forall t\geq 0\qquad \sqrt{n}L_n\Psi_f\big(X^{1}_{n}(t),\dots,X^{n}_{n}(t)\big)=\Gs_n f\big(\Ss_n(t),\Ts_n(t)\big),\]
where, for any $(x,y)\in \R^2$,
\begin{multline*}
\Gs_nf(x,y)=-\frac{\sqrt{n}y}{\s^2}\frac{\partial f}{\partial y}(x,y)-\frac{n^{1/4}xy}{2\s^4}\frac{\partial f}{\partial x}(x,y)\\
+\frac{1}{2\s^6}\left(xy^2-x^3\s^2\right)\frac{\partial f}{\partial x}(x,y)+\frac{1}{2}\frac{\partial^2 f}{\partial x^2}(x,y)+2\s^2\frac{\partial^2 f}{\partial y^2}(x,y)+R_n^f(x,y)
\end{multline*}
with
\begin{multline*}
R_n^f(x,y)=\frac{\partial f}{\partial x}(x,y)\, R^{(1)}_n(x,y)+\frac{\partial f}{\partial y}(x,y)\, R^{(2)}_n(x,y)\\
+\frac{2x}{n^{1/4}}\frac{\partial^2 f}{\partial x \partial y}(x,y)+\frac{2y}{n^{1/4}}\frac{\partial^2 f}{\partial y^2}(x,y),
\end{multline*}
where $\smash{(R^{(1)}_n)_{n\geq 1}}$ and $\smash{(R^{(2)}_n)_{n\geq 1}}$ are sequences of functions from $\R^2$ to $\R$ verifying
\[\forall k>0 \qquad \lim_{n\to+\infty}\, \sup_{\gfrac{(x,y) \in \R^2}{\|(x,y)\|\leq k}}\,\max\,\left(\,|R^{(1)}_n(x,y)|\,,\,|R^{(2)}_n(x,y)|\,\right)=0.\]
\label{GenUn} 
\end{prop}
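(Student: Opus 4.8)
The plan is to compute $\sqrt{n}\,L_n\Psi_f$ directly by the chain rule and then expand the nonlinear drift term $A=S_n[x]/(T_n[x]+1)$ in powers of $n^{-1/4}$ once everything is evaluated on the process.

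First I would set $u=S_n[x]/n^{3/4}$ and $v=T_n[x]/n^{3/4}-n^{1/4}\s^2$, so $\Psi_f(x)=f(u,v)$, and record $\partial u/\partial x_j=n^{-3/4}$, $\partial v/\partial x_j=2x_j n^{-3/4}$, $\partial^2 v/\partial x_j^2=2n^{-3/4}$. The chain rule then gives $\partial\Psi_f/\partial x_j=n^{-3/4}\frac{\partial f}{\partial x}+2x_j n^{-3/4}\frac{\partial f}{\partial y}$ and an explicit expression for $\partial^2\Psi_f/\partial x_j^2$; in particular the latter carries a term $2n^{-3/4}\frac{\partial f}{\partial y}$ coming from $\partial^2 v/\partial x_j^2$. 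Summing the diagonal second derivatives and using $\sum_j x_j=S_n[x]$, $\sum_j x_j^2=T_n[x]$ produces the diffusion part of $L_n\Psi_f$; its $\frac12\sum_j 2n^{-3/4}\frac{\partial f}{\partial y}$ contribution equals $n^{1/4}\frac{\partial f}{\partial y}$, which after multiplication by $\sqrt{n}$ becomes a divergent term $n^{3/4}\frac{\partial f}{\partial y}$ that must cancel later.

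Next I would treat the drift. With $\phi'(z)=-z/(2\s^2)$, the coefficient of $\partial\Psi_f/\partial x_j$ in $L_n$ is $\tfrac12 A-\tfrac12 x_j A^2-\tfrac{x_j}{2\s^2}$. Substituting the partials of $\Psi_f$, grouping the $\frac{\partial f}{\partial x}$ and $\frac{\partial f}{\partial y}$ terms, and multiplying by $\sqrt{n}$ yields the coefficient $n^{-1/4}\!\big(\tfrac{n}{2}A-\tfrac12 A^2 S_n[x]-\tfrac{S_n[x]}{2\s^2}\big)$ for $\frac{\partial f}{\partial x}$ and $n^{-1/4}\!\big(A S_n[x]-A^2 T_n[x]-\tfrac{T_n[x]}{\s^2}\big)$ for $\frac{\partial f}{\partial y}$. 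I would then evaluate on the process, setting $S_n[x]=n^{3/4}\Ss_n$ and $T_n[x]=n^{3/4}\Ts_n+n\s^2$, and expand $A=\frac{x}{n^{1/4}\s^2}(1+\epsilon)^{-1}$ with $\epsilon=\frac{y}{n^{1/4}\s^2}+\frac{1}{n\s^2}$ through the geometric series. The decisive point is the bookkeeping of cancellations: in the $\frac{\partial f}{\partial x}$ coefficient the term $\tfrac{n}{2}A$ contributes $\tfrac{n^{3/4}x}{2\s^2}$, cancelling $-S_n[x]/(2\s^2)$ exactly, and the surviving leading part is $-\tfrac{n^{1/4}xy}{2\s^4}+\tfrac{1}{2\s^6}(xy^2-x^3\s^2)$; in the $\frac{\partial f}{\partial y}$ coefficient the combination $AS_n[x]-A^2T_n[x]$ cancels to high order, leaving $-n^{3/4}-\tfrac{\sqrt{n}\,y}{\s^2}$, and the divergent $-n^{3/4}$ annihilates precisely the $n^{3/4}\frac{\partial f}{\partial y}$ term from the diffusion part, leaving $-\tfrac{\sqrt{n}\,y}{\s^2}$. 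The second-order terms $\tfrac12\frac{\partial^2 f}{\partial x^2}$, $2\s^2\frac{\partial^2 f}{\partial y^2}$, $\tfrac{2x}{n^{1/4}}\frac{\partial^2 f}{\partial x\partial y}$ and $\tfrac{2y}{n^{1/4}}\frac{\partial^2 f}{\partial y^2}$ read off directly from the diffusion computation, and all contributions not appearing in $\Gs_n f$ are collected into $R_n^{(1)}$ and $R_n^{(2)}$.

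Finally I would establish the uniform bound on the remainders. On a fixed ball $\{\|(x,y)\|\le k\}$ and for $n$ large, $\epsilon$ is small, so $|A|$ and the tail of the geometric series are controlled by explicit powers of $n^{-1/4}$ times polynomials in $(x,y)$; every term relegated to $R_n^{(1)}$ or $R_n^{(2)}$ then carries a strictly negative power of $n$, giving $\sup_{\|(x,y)\|\le k}\max(|R_n^{(1)}|,|R_n^{(2)}|)\to 0$. I expect the main obstacle to be exactly this bookkeeping: one must retain enough terms in the expansion of $A$ to witness that the $n^{3/4}$, $\sqrt{n}$ and $n^{1/4}$ orders all cancel in the $\frac{\partial f}{\partial y}$ coefficient before the finite limit emerges, and the remainder estimate has to be made uniform on compacts rather than merely pointwise, which is where the hypothesis that the expansion variables stay bounded does the work.
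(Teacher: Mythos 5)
Your proposal is correct and follows essentially the same route as the paper: compute $L_n\Psi_f$ by the chain rule, substitute $S_n[x]=n^{3/4}x$, $T_n[x]=n^{3/4}y+n\sigma^2$, expand the factor $(1+\epsilon)^{-1}$ (the paper's $h_n(y)$) to witness the cancellation of the divergent $n^{3/4}$, $\sqrt{n}$ and $n^{1/4}$ orders, and absorb the tails into $R_n^{(1)}$, $R_n^{(2)}$ with uniform-on-compacts estimates. The only cosmetic difference is that the paper simplifies $AS_n[x]-A^2T_n[x]$ exactly to $S_n^2[x]/(1+T_n[x])^2$ by algebra before substituting, whereas you obtain the same vanishing contribution through the geometric-series bookkeeping you describe.
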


\begin{proof} Let us define $\Psi_f$ on $\R^n$ by
\[\forall x\in \R^n\qquad \Psi_f(x)=f\left(\frac{S_n[x]}{n^{3/4}},\frac{T_n[x]}{n^{3/4}}-n^{1/4}\s^2\right).\]
Let $x\in \R^n$. For any $i\in\{1,\dots,n\}$, we have
\[\frac{\partial \Psi_f(x)}{\partial y_j}=\frac{1}{n^{3/4}}\frac{\partial f}{\partial x}(\,\cdots)+\frac{2x_j}{n^{3/4}}\frac{\partial f}{\partial y}(\,\cdots),\]
\[\frac{\partial^2 \Psi_f(x)}{\partial y_j^2}=\frac{1}{n^{3/2}}\frac{\partial^2 f}{\partial x^2}(\,\cdots)+\frac{4x_j}{n^{3/2}}\frac{\partial^2 f}{\partial x\partial y}(\,\cdots)+\frac{4x_j^2}{n^{3/2}}\frac{\partial^2 f}{\partial y^2}(\,\cdots)+\frac{2}{n^{3/4}}\frac{\partial f}{\partial y}(\,\cdots),\]
where we write
\[(\,\cdots) \qquad \mbox{instead of}\qquad \left(\frac{S_n[x]}{n^{3/4}},\frac{T_n[x]}{n^{3/4}}-n^{1/4}\s^2\right)\]
in order to simplify the notations. We have then
\begin{multline*}
L_n\Psi_f(x)=\frac{1}{2}\sum_{j=1}^n\left[\frac{\partial^2 \Psi_f(x)}{\partial x_j^2}+\left(\!\frac{S_n[x]}{T_n[x]+1}-x_j\left(\!\frac{S_n[x]}{T_n[x]+1}\!\right)^2\!-\!\frac{x_j}{\s^2}\!\right)\!\frac{\partial \Psi_f(x)}{\partial x_j}\right]\\
=\frac{1}{2\sqrt{n}}\frac{\partial^2 f}{\partial x^2}(\,\cdots)+\frac{2S_n[x]}{n^{3/2}}\frac{\partial^2 f}{\partial x\partial y}(\,\cdots)+\frac{2T_n[x]}{n^{3/2}}\frac{\partial^2 f}{\partial y^2}(\,\cdots)\\
\qquad+\frac{1}{2}\left(\frac{n^{1/4}S_n[x]}{1+T_n[x]}-\frac{S_n^3[x]}{n^{3/4}(1+T_n[x])^2}-\frac{S_n[x]}{n^{3/4}\s^2}\right)\frac{\partial f}{\partial x}(\,\cdots)\\
\qquad+\left(n^{1/4}+\frac{S_n^2[x]}{n^{3/4}(1+T_n[x])^2}-\frac{T_n[x]}{n^{3/4}\s^2}\right)\frac{\partial f}{\partial y}(\,\cdots).
\end{multline*}
We obtain that
\[\sqrt{n}L_n\Psi_f(x)=\Gs_n f\left(\frac{S_n[x]}{n^{3/4}},\frac{T_n[x]}{n^{3/4}}-n^{1/4}\s^2\right),\]
where $\Gs_nf$ is defined on $\R^2$ by
\begin{multline*}
\forall (x,y)\in \R^2\qquad \Gs_nf(x,y)=\frac{2x}{n^{1/4}}\frac{\partial^2 f}{\partial x\partial y}(x,y)+\left(\frac{2y}{n^{1/4}}+2\s^2\right)\frac{\partial^2 f}{\partial y^2}(x,y)\\
+\frac{1}{2}\frac{\partial^2 f}{\partial x^2}(x,y)+\left(-\frac{\sqrt{n}x}{2\s^2}(1-h_n(y))-\frac{x^3}{2\s^4}h_n(y)^2\right)\frac{\partial f}{\partial x}(x,y)\\
+\left(-\frac{\sqrt{n}y}{\s^2}+\frac{x^2}{n^{3/4}\s^4}h_n(y)^2\right)\frac{\partial f}{\partial y}(x,y),
\end{multline*}
with
\[h_n : y\in \,]-\s^2n^{1/4},+\infty[\,\longmapsto\left(1+\frac{y}{n^{1/4}\s^2}+\frac{1}{n\s^2}\right)^{-1}.\]
We introduce the functions $\eps_n^{(1)}$ and $\eps_n^{(2)}$ such that
\[\forall y>-\s^2n^{1/4}\qquad h_n(y)=1-\frac{y}{n^{1/4}\s^2}+\frac{y^2}{\sqrt{n}\s^4}+\frac{1}{\sqrt{n}}\eps_n^{(1)}(y)\]
and $\eps_n^{(2)}(y)=h_n(y)^2-1$. We obtain the formula of $\Gs_nf$ given in the proposition with
\[R_n^{(1)}:(x,y)\longmapsto \frac{x}{2\s^2}\eps_n^{(1)}(y)-\frac{x^3}{2\s^4}\eps_n^{(2)}(y)\quad \mbox{and}\quad R_n^{(2)}:(x,y)\longmapsto \frac{x^2h_n(y)^2}{n^{3/4}\s^2}.\]
It is easy to see that $\smash{(R^{(1)}_n)_{n\geq 1}}$ and $\smash{(R^{(2)}_n)_{n\geq 1}}$ are sequences of functions which converge to $0$ uniformly over any compact set in $\R^2$.
\end{proof}

\subsection{Sketch of proof of theorem~\ref{MainTheoGauss}}
\label{sketch}

Let us denote by $G_{\s}$ the infinitesimal generator of the Markov process which is solution of $(\mathcal{S}_{\s})$. It is defined by
\[\forall f\in \Ck{2}(\R)\qquad\forall x \in \R\qquad G_{\s}f(x)=\frac{1}{2}f''(x)-\frac{x^3}{2\s^4}f'(x)\]

Let $n\geq 1$ and $f\in \Ck{2}(\R)$. By abuse of notation, we also write $f$ for the function $(x,y)\in \R^2\longmapsto f(x)$. The essential ingredient for the proof of theorem~\ref{MainTheoGauss} is the introduction of a suitable martingale problem. By It\^o's formula (see~\cite{SV}), we prove that
\[f\big(\Ss_n(t) \big)=f\big(\Ss_n(0) \big)+\int_{0}^t\Gs_nf\big(\Ss_n(s) \big)\,ds+\Mc_{n,f}(t),\]
where $\Mc_{n,f}$ is a local martingale. By proposition~\ref{GenUn}, we have
\[\Gs_nf\big(\Ss_n \big)=\underbrace{\left(-\frac{n^{1/4}\Ss_n\Ts_n}{2\s^4}+\frac{\Ss_n\Ts_n^2}{2\s^6}\right)f'\big(\Ss_n \big)}_{\widetilde{A}_f\big(\Ss_n,\Ts_n\big)}\,+\,G_{\s}f\big(\Ss_n \big)+f'\big(\Ss_n \big)R_n^{(1)}\big(\Ss_n,\Ts_n\big)\]
where $\smash{(R^{(1)}_n)_{n\geq 1}}$ is a sequence of functions which converges to $0$ uniformly over any compact set in $\R^2$.\medskip

\textbf{Step 1:} We notice that the term $\smash{\widetilde{A}_f\big(\Ss_n,\Ts_n\big)}$ does not converge a priori. To solve this problem, we introduce a perturbation: we transform the function $f$ into a function $F_{n,f}$ which converges to $f$ as $n$ goes to $\infty$, and which satisfies
\[\Gs_nF_{n,f}\big(\Ss_n,\Ts_n \big)=G_{\s}f\big(\Ss_n \big)+\mbox{a remainder}.\]
Notice that the perturbation theory and methodology was first introduced in~\cite{PapaSV}.\medskip

\textbf{Step 2:} For any $k\geq 1$, we define the stopping time $\tau_n^k$ as the first exit time of a path of $\big(\Ss_n,\Ts_n\big)$ from the domain $[-k,k]^2$, and we prove that $\smash{\Mc_{n,f}^k=\Mc_{n,f}(\,\cdot\,\wedge \tau_n^k\wedge T)}$ is a martingale which is bounded over $\Ll^2$, for any $T>0$ and $k\geq 1$.\medskip

\textbf{Step 3:} We prove that $\P(\t_n^{k}\leq T)$, the probability that a path of $\big(\Ss_n,\Ts_n\big)$ exits $[-k,k]^2$ before the time $T$, goes to $0$ when $n$ and $k$ goes to $+\infty$. We also use the concept of collapsing processes (see appendix) in order to prove that the sequence of processes $\big(\Ts_n(t),\,t\geq 0\big)_{n\geq 1}$ converges to $0$ in the following sense:
\[\forall\eta>0\qquad \lim_{n\to+\infty}\,\P\left(\,\sup_{0\leq t\leq T}\,\big|\Ts_n(t)\big|>\eta\,\right)=0.\]

\textbf{Step 4:} We prove that the sequence $(\Ss_n(t),\,t\geq 0)_{n\geq 1}$ is tight in the Skorokhod space $\Dc([0,T],\R)$.\medskip

\textbf{Step 5:} We deduce from the previous steps that there exists a subsequence $\smash{\big(\Ss_{m_n}\big)_{n\geq 1}}$ which converges in distribution to some process $\Uc$ on $\Dc([0,T],\R)$. We prove then that, for any $k\geq 1$ and $t\in [0,T]$,
\[\Mc^k_{m_n,f}(t)\overset{\loi}{\underset{n\to +\infty}{\longrightarrow}}\Mc_f(t)=f(\Uc(t\wedge T))-f(\Uc(0))-\int_0^{t\wedge T} G_{\s}f(\Uc(s))\,ds,\]
and that $\Mc_f$ is a martingale. As a consequence $\Uc$ is uniquely determined as the unique solution of the martingale problem associated to $G_{\s}$. We conclude that $\Uc$ is the solution of $(\Sc_{\s})$ and that $\smash{\big(\Ss_n\big)_{n\geq 1}}$ converges in distribution to $\Uc$ on $\Dc([0,T],\R)$, and thus on $C([0,T],\R)$.\medskip

These steps are developed in detail in the next section.

\section{Proof of theorem~\ref{MainTheoGauss}}
\label{proof}

\subsubsection{Perturbation}

Let $f \in \Ck{2}(\R)$. We want to find functions $H_f$ and $K_f$ defined on $\R^2$ such that
\[F_{n,f}:(x,y)\longmapsto f(x)+\frac{1}{n^{1/4}}H_f(x,y)+\frac{1}{\sqrt{n}}K_f(x,y),\]
satisfies
\[\Gs_n F_{n,f}=G_{\s}f+\widetilde{R}_{n,f},\]
where $\widetilde{R}_{n,f}$ is a remainder term. Let us find necessary conditions. We suppose that we have built $H_f$ and $K_f$ and we assume that they are~$\Ck{2}$. We have then, for any $(x,y)\in \R^2$,
\begin{multline*}
\Gs_{n}F_{n,f}(x,y)=n^{1/4}\left(-\frac{y}{\s^2}\frac{\partial H_f}{\partial y}(x,y)-\frac{xy}{2\s^4}f'(x)\right)-\frac{y}{\s^2}\frac{\partial K_f}{\partial y}(x,y)\\
-\frac{xy}{2\s^4}\frac{\partial H_f}{\partial x}(x,y)+\frac{1}{2\s^6}\left(xy^2-x^3\s^2\right)f'(x)+\frac{1}{2}f''(x)+\mbox{a remainder}\,.
\end{multline*}
The function $H_f$ should verify
\[\forall (x,y)\in \R^2\qquad -\frac{y}{\s^2}\frac{\partial H_f}{\partial y}(x,y)-\frac{xy}{2\s^4}f'(x)=0.\]
We choose
\[H_f:(x,y)\longmapsto-\frac{xy}{2\s^2}f'(x).\]
Therefore the function $K_f$ should satisfy, for all $(x,y)\in \R^2$,
\begin{align*}
\Gs_{n}F_{n,f}(x,y)&=-\frac{y}{\s^2}\frac{\partial K_f}{\partial y}(x,y)+\frac{xy^2}{4\s^6}(f'(x)+xf''(x))\\
&\quad\qquad\qquad+\frac{1}{2\s^6}\left(xy^2-x^3\s^2\right)f'(x)+\frac{1}{2}f''(x)+\mbox{the remainder}\\
&=-\frac{y}{\s^2}\frac{\partial K_f}{\partial y}(x,y)+\frac{xy^2}{4\s^6}(3f'(x)+xf''(x))\\
&\qquad\qquad\qquad\qquad-\frac{x^3}{2\s^4}f'(x)+\frac{1}{2}f''(x)+\mbox{the remainder}\,.
\end{align*}
So that the variable $y$ disappears in the leading term of $\Gs_{n}F_{n,f}(x,y)$, the function $K_f$ should verify
\[\forall (x,y)\in \R^2\qquad-\frac{y}{\s^2}\frac{\partial K_f}{\partial y}(x,y)+\frac{xy^2}{4\s^6}(3f'(x)+xf''(x))=0.\]
We choose
\[K_f:(x,y)\longmapsto \frac{xy^2}{8\s^4}(3f'(x)+xf''(x)).\]
It is easy to see that these choices for $H_f$ and $K_f$ are sufficient for the variable~$y$ to disappear in the leading term of $\Gs_{n}F_{n,f}(x,y)$. The remainder term is then
\[\widetilde{R}_{n,f}=R_n^f+\frac{1}{n^{1/4}}R_n^{H_f}+\frac{1}{\sqrt{n}}R_n^{K_f}.\]
We notice that, so that the above computations are possible, it is necessary that $f$ is $\Ck{4}$. Indeed, the first four derivatives of $f$ appear in the remainder term. We also remark that, if $f\in \Ck{4}(\R)$, then the functions 
$H_f$, $K_f$ and their first and second derivatives are bounded over any compact set in $\R^2$. Finally let us recall that $\smash{(R^{(1)}_n)_{n\geq 1}}$ and $\smash{(R^{(2)}_n)_{n\geq 1}}$ are sequences of functions which converge to $0$ when $n$ goes to $+\infty$, uniformly over any compact set. As a consequence we have the following proposition:

\begin{prop} Let $n\geq 1$ and $f \in \Ck{4}(\R)$. We define $H_f$ and $K_f$ on $\R^2$ by
\[\forall (x,y)\in \R^2\qquad H_f(x,y)=-\frac{xy}{2\s^2}f'(x),\qquad K_f(x,y)=\frac{xy^2}{8\s^4}(3f'(x)+xf''(x)).\]
Then the function
\[F_{n,f}:(x,y)\longmapsto f(x)+\frac{1}{n^{1/4}}H_f(x,y)+\frac{1}{\sqrt{n}}K_f(x,y),\]
verifies $\Gs_n F_{n,f}=G_{\s}f+\widetilde{R}_{n,f}$, with $\widetilde{R}_{n,f}$ a remainder term satisfying
\[\forall k>0 \qquad \lim_{n\to+\infty}\, \sup_{\gfrac{(x,y) \in \R^2}{\|(x,y)\|\leq k}}\,\left|\widetilde{R}_{n,f}(x,y)\right|=0.\]
\label{GenS_n()}
\end{prop}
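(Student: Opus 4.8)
The plan is to exploit the linearity of the operator $\Gs_n$ from Proposition~\ref{GenUn}. Since $\Gs_n$ acts linearly on its argument, I would write
\[\Gs_n F_{n,f}=\Gs_n f+\frac{1}{n^{1/4}}\Gs_n H_f+\frac{1}{\sqrt{n}}\Gs_n K_f,\]
view $f=f(x)$ as a function on $\R^2$ with no $y$-dependence, and expand each of the three pieces using the explicit formula for $\Gs_n$. The whole computation then reduces to collecting the resulting terms according to the powers $n^{1/2}$, $n^{1/4}$, $n^0$ and negative powers of $n$, and checking that the prescribed $H_f$ and $K_f$ annihilate the divergent contributions, leaving exactly $G_{\s}f$ at order $n^0$.

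First I would handle the order-$n^{1/4}$ terms. These come only from $-\frac{n^{1/4}xy}{2\s^4}f'(x)$ inside $\Gs_n f$ and from $\frac{1}{n^{1/4}}\big(-\frac{\sqrt{n}\,y}{\s^2}\partial_y H_f\big)$ inside $\frac{1}{n^{1/4}}\Gs_n H_f$. They cancel precisely because $H_f$ solves the first-order equation $-\frac{y}{\s^2}\partial_y H_f(x,y)-\frac{xy}{2\s^4}f'(x)=0$ in the variable $y$ (with $x$ a parameter); the operator $-\frac{y}{\s^2}\partial_y$ acts diagonally on the monomials $y^k$, so the right-hand side, being divisible by $y$, can be inverted, which is exactly what makes the choice $H_f(x,y)=-\frac{xy}{2\s^2}f'(x)$ work. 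Next I would collect the order-$n^0$ terms, coming from the $y$-free parts of $\Gs_n f$, from the $n^{1/4}$-part of $\Gs_n H_f$ divided by $n^{1/4}$, and from the $\sqrt{n}$-part of $\Gs_n K_f$ divided by $\sqrt{n}$. After substituting $H_f$, the $y$-dependent portion of this order-$n^0$ contribution equals $\frac{xy^2}{4\s^6}(3f'(x)+xf''(x))$, and the choice $K_f(x,y)=\frac{xy^2}{8\s^4}(3f'(x)+xf''(x))$, obtained by the same $y$-integration from $-\frac{y}{\s^2}\partial_y K_f(x,y)+\frac{xy^2}{4\s^6}(3f'(x)+xf''(x))=0$, cancels it exactly, leaving $\frac{1}{2}f''(x)-\frac{x^3}{2\s^4}f'(x)=G_{\s}f(x)$.

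It then remains to check that everything not accounted for above forms the remainder $\widetilde{R}_{n,f}$ and tends to $0$ uniformly on compact sets. Every such term is a product of a polynomial in $(x,y)$, one of the derivatives $f',f'',f''',f^{(4)}$, and a factor that is either a negative power of $n$ (at most $n^{-1/4}$) or one of the functions $R_n^{(1)},R_n^{(2)}$ supplied by Proposition~\ref{GenUn}. On a fixed compact $\{\|(x,y)\|\leq k\}$ the polynomials are bounded, the derivatives of $f$ are bounded because $f\in\Ck{4}$, and the prefactors converge to $0$ uniformly -- the negative powers of $n$ trivially and the $R_n^{(i)}$ by their defining uniform-convergence property -- so $\widetilde{R}_{n,f}\to 0$ uniformly on compacts. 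The fourth derivative $f^{(4)}$ enters through $\partial_x^2 K_f$, which is precisely why the hypothesis $f\in\Ck{4}$ is required.

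I expect the main obstacle to be purely organizational rather than analytic: the expansions of $\Gs_n H_f$ and $\Gs_n K_f$ produce a large number of terms, and the two cancellations are visible only after each term has been assigned its correct order in $n$. The care needed is to verify that nothing of order $n^{1/4}$ or $n^0$ is overlooked, so that the leading operator indeed collapses to $G_{\s}f$; once the bookkeeping is laid out, the two defining equations for $H_f$ and $K_f$ are elementary integrations in $y$ and the uniform control of the remainder is immediate from the compactness argument above.
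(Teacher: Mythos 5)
Your proposal is correct and takes essentially the same route as the paper's own proof: expand $\Gs_n F_{n,f}$ by linearity using the explicit formula of proposition~\ref{GenUn}, cancel the order-$n^{1/4}$ term via the equation $-\frac{y}{\s^2}\partial_y H_f-\frac{xy}{2\s^4}f'=0$ and the $y$-dependent order-one term $\frac{xy^2}{4\s^6}(3f'+xf'')$ via the analogous equation for $K_f$, and absorb everything else (polynomials times $f',\dots,f^{(4)}$ times $n^{-1/4}$, $n^{-1/2}$ or $R_n^{(1)},R_n^{(2)}$) into a remainder that vanishes uniformly on compacts. The paper presents the identical computation merely in the reverse order, deriving $H_f$ and $K_f$ as necessary conditions before noting their sufficiency, and it likewise locates the need for $f\in \Ck{4}$ in the second $x$-derivatives of $K_f$.
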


\subsubsection{Introduction of a martingale problem}

We give ourselves $n\geq 1$ and $f \in \Ck{4}(\R)$. For any $t\geq 0$, we have
\[f\left(\frac{S_n(\sqrt{n}t)}{n^{3/4}}\right)=f\big(\Ss_n(t)\big)=\left(F_{n,f}-\frac{1}{n^{1/4}}H_f-\frac{1}{\sqrt{n}}K_f\right)\big(\Ss_n(t),\Ts_n(t)\big).\]

We define the process $\smash{(\Mc_{n,f}(t),\,t\geq 0)}$ by
\begin{multline*}
\forall t\geq 0 \qquad \Mc_{n,f}(t)=F_{n,f}\big(\Ss_n(t),\Ts_n(t)\big)-F_{n,f}\big(\Ss_n(0),\Ts_n(0)\big)\\-\int_0^t\Gs_n F_{n,f}\big(\Ss_n(s),\Ts_n(s)\big)\,ds.
\end{multline*}
By applying It\^o's formula to the function
\[\Psi_{n,f}: (x_1,\dots,x_n)\in \R^n\longmapsto F_{n,f}\left(\frac{S_n[x]}{n^{3/4}},\frac{T_n[x]}{n^{3/4}}-n^{1/4}\s^2\right),\]
we obtain
\[\forall t\geq 0 \qquad \Mc_{n,f}(t)=n^{1/4}\sum_{j=1}^n\int_0^t \frac{\partial \Psi_{n,f}}{\partial x_j}\big(X_n(\sqrt{n} s)\big)\,dB_j(s).\]
It is a local martingale and
\[\forall t\geq 0 \qquad \langle \Mc_{n,f},\Mc_{n,f}\rangle_t=\sqrt{n}\sum_{j=1}^n\int_0^{t}\left(\frac{\partial \Psi_{n,f}}{\partial x_j}\right)^2\big(X_n(\sqrt{n}s)\big)\,ds.\]
For any $k>0$, we introduce the stopping time $\t_n^k$ defined by
\[\t_n^k=\inf_{t\geq 0}\,\Big\{\,\,\big|\Ss_n(t)\big|\geq k\quad\mbox{or}\quad   \big|\Ts_n(t)\big|\geq k\,\Big\}.\]
Let $T>0$. We denote $\smash{\Mc_{n,f}^k(t)=\Mc_{n,f}(t\wedge \t_n^k\wedge T)}$ for any $t\geq 0$.

\begin{lem} For all $k\geq 1$, $n\geq 1$ and $f\in \Ck{4}(\R)$, the process $\smash{\Mc_{n,f}^k}$ is a martingale which is bounded over~$\Ll^2$. Moreover
\[\forall t\geq 0\qquad\sup_{n\geq 1}\,\E\left(\Mc_{n,f}^k(t)^2\right)<+\infty.\]
\label{Mnf-martingale}
\end{lem}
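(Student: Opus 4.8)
The plan is to reduce everything to an $\Ll^2$ estimate on the bracket of $\Mc_{n,f}$ up to the stopping time. We already know that $\Mc_{n,f}$ is a continuous local martingale with $\Mc_{n,f}(0)=0$ and explicit quadratic variation
\[\langle \Mc_{n,f},\Mc_{n,f}\rangle_t=\sqrt{n}\sum_{j=1}^n\int_0^{t}\left(\frac{\partial \Psi_{n,f}}{\partial x_j}\right)^2\big(X_n(\sqrt{n}s)\big)\,ds.\]
Stopping a continuous local martingale at $\t_n^k\wedge T$ preserves the local martingale property and yields $\langle \Mc_{n,f}^k,\Mc_{n,f}^k\rangle_t=\langle \Mc_{n,f},\Mc_{n,f}\rangle_{t\wedge\t_n^k\wedge T}$. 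Hence it suffices to show that $\E\big[\langle \Mc_{n,f},\Mc_{n,f}\rangle_{t\wedge\t_n^k\wedge T}\big]$ is finite and bounded uniformly in $n$: the standard $\Ll^2$ theory of continuous local martingales (a continuous local martingale null at $0$ whose bracket has finite expectation at each time is a square-integrable martingale satisfying $\E\big[M(t)^2\big]=\E\big[\langle M,M\rangle_t\big]$) then upgrades $\Mc_{n,f}^k$ to a true $\Ll^2$-bounded martingale and simultaneously gives the asserted uniform bound.

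First I would compute the integrand. Differentiating $\Psi_{n,f}(x)=F_{n,f}(S_n[x]/n^{3/4},\,T_n[x]/n^{3/4}-n^{1/4}\s^2)$ with respect to $x_j$ gives
\[\frac{\partial \Psi_{n,f}}{\partial x_j}(x)=\frac{1}{n^{3/4}}\frac{\partial F_{n,f}}{\partial x}(\,\cdots)+\frac{2x_j}{n^{3/4}}\frac{\partial F_{n,f}}{\partial y}(\,\cdots),\]
with $(\,\cdots)$ the same pair of arguments. Since $F_{n,f}=f+n^{-1/4}H_f+n^{-1/2}K_f$ and $f\in\Ck{4}(\R)$, both $\partial F_{n,f}/\partial x$ and $\partial F_{n,f}/\partial y$ are bounded on the square $[-k,k]^2$ by a constant $C_k$ independent of $n$: the prefactors $n^{-1/4},n^{-1/2}$ are $\le 1$ for $n\ge1$, and $H_f,K_f$ together with their first derivatives are bounded on $[-k,k]^2$ (as recalled before Proposition~\ref{GenS_n()}).

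Then I would use that, on $\{s\le \t_n^k\}$, the pair $(\Ss_n(s),\Ts_n(s))$ lies in $[-k,k]^2$, so $(\,\cdots)\in[-k,k]^2$ and $T_n[X_n(\sqrt{n}s)]=n^{3/4}\Ts_n(s)+n\s^2\le n^{3/4}k+n\s^2$. Squaring the derivative, summing over $j$ and using $\sum_j x_j^2=T_n[x]$, I obtain on the stopped region
\[\sqrt{n}\sum_{j=1}^n\left(\frac{\partial \Psi_{n,f}}{\partial x_j}\right)^2(x)\le \frac{2C_k^2}{n}\big(n+4T_n[x]\big)\le 2C_k^2\left(1+\frac{4k}{n^{1/4}}+4\s^2\right),\]
which for $n\ge1$ is dominated by a constant $C'_k$ depending only on $k$, $f$ and $\s$. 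Integrating and taking expectations gives $\E\big[\langle \Mc_{n,f},\Mc_{n,f}\rangle_{t\wedge\t_n^k\wedge T}\big]\le C'_k\,(t\wedge T)\le C'_k\,T$, uniformly in $n$, which closes the argument.

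The main obstacle is exactly this uniformity in $n$, and it rests on careful bookkeeping of the powers of $n$: the factor $\sqrt{n}$ in the bracket, the $n^{-3/4}$ from each spatial derivative (squared to $n^{-3/2}$), and the fact that $\sum_j x_j^2=T_n$ is of order $n$ on the stopped region combine to $\sqrt{n}\cdot n^{-3/2}\cdot n=1$. This cancellation is precisely critical, which is why the space scale $n^{3/4}$ is the right one; any other scaling would make the bracket either blow up or collapse. The only other delicate point is that the perturbation terms $H_f$ and $K_f$, which are genuinely unbounded in $y$, do not spoil the estimate — this is guaranteed because they are evaluated only on the compact square $[-k,k]^2$ and carry the decaying prefactors $n^{-1/4}$ and $n^{-1/2}$.
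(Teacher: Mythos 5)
Your proof is correct and follows essentially the same route as the paper: both compute the quadratic variation of the stopped local martingale, bound $\sum_j(\partial\Psi_{n,f}/\partial x_j)^2$ by $C/\sqrt{n}$ on the region where $(\Ss_n,\Ts_n)\in[-k,k]^2$ (using the explicit derivatives of $\Psi_{n,f}$, the boundedness of $H_f$, $K_f$ and their derivatives on compacts, and $T_n=n\s^2+n^{3/4}\Ts_n$), and then invoke the standard $\Ll^2$ theory of continuous local martingales with integrable bracket. Your explicit tracking of the cancellation $\sqrt{n}\cdot n^{-3/2}\cdot n=1$ is exactly the observation implicit in the paper's constant $C_f^k$.
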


\begin{proof} For any $t\geq 0$, we have
\[\langle \Mc_{n,f}^k,\Mc_{n,f}^k\rangle_t=\sqrt{n}\sum_{j=1}^n\int_0^{t\wedge\t_n^k\wedge T}\left(\frac{\partial \Psi_{n,f}}{\partial x_j}\right)^2\big(X_n(\sqrt{n}s)\big)\,ds.\]
Moreover, for all $i\in \{1,\dots,n\}$ and $x\in \R^n$,
\begin{equation}
\begin{split}
\frac{\partial \Psi_{n,f}}{\partial x_i}&(x)=\frac{1}{n^{3/4}}f'\left(\frac{S_n[x]}{n^{3/4}}\right)\\
&\begin{split}
+\frac{1}{n^{3/4}}&\left(\frac{1}{n^{1/4}}\frac{\partial H_f}{\partial x}+\frac{1}{n^{1/2}}\frac{\partial K_f}{\partial x}\right)\left(\frac{S_n[x]}{n^{3/4}},\frac{T_n[x]}{n^{3/4}}-n^{1/4}\s^2\right)\\
&+\frac{2x_i}{n^{3/4}}\left(\frac{1}{n^{1/4}}\frac{\partial H_f}{\partial y}+\frac{1}{n^{1/2}}\frac{\partial K_f}{\partial y}\right)\left(\frac{S_n[x]}{n^{3/4}},\frac{T_n[x]}{n^{3/4}}-n^{1/4}\s^2\right).
\end{split}
\end{split}
\label{Psi(n,f)}
\end{equation}
By squaring these terms and by summing over all $i\in \{1,\dots,n\}$, we observe that there exists a constant $C_f^k>0$ such that, for all $x\in \R^n$ verifying
\[\qquad \left|\frac{S_n[x]}{n^{3/4}}\right|<k\qquad\mbox{and}\qquad \left|\frac{T_n[x]}{n^{3/4}}-n^{1/4}\s^2\right|<k,\]
we have
\[\sum_{j=1}^n\left(\frac{\partial \Psi_{n,f}}{\partial x_j}\right)^2\leq \frac{C_f^k}{\sqrt{n}}.\]
As a consequence, for any $t\geq 0$,
\[\sup_{n\geq 1}\,\E\left(\langle \Mc_{n,f}^k,\Mc_{n,f}^k\rangle_t\right)\leq C_f^k\,T.\]
Therefore, for any $n\geq 1$, the process $\smash{\Mc_{n,f}^k}$ is a martingale bounded over~$\Ll^2$ (see theorem~4.8~of~\cite{LeGall}) and
\[\forall t\geq 0\qquad\E\left(\Mc_{n,f}^k(t)^2\right)=\E\left(\langle \Mc_{n,f}^k,\Mc_{n,f}^k\rangle_t\right)\leq C_f^k\,T.\]
This ends the proof of the lemma.
\end{proof}

\subsubsection{Study of the asymptotic behavior $(\t_n^k)_{n\geq 1}$}

\begin{lem} For any $\eps>0$, there exist $n_{\eps}\geq 1$ and $k_{\eps}\geq 1$ such that
\[\sup_{n\geq n_{\eps}}\,\P\left(\t_n^{k_{\eps}}\leq T\right)\leq \eps.\]
Moreover the process $\big(\Ts_n(t),\,t\geq 0\big)_{n\geq 1}$ collapses to zero, i.e.,
\[\forall\eta>0\qquad \lim_{n\to+\infty}\,\P\left(\,\sup_{0\leq t\leq T}\,\big|\Ts_n(t)\big|>\eta\,\right)=0.\]
\label{LemArret}
\end{lem}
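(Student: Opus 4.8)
The plan is to read off the semimartingale dynamics of $\Ss_n$ and $\Ts_n$ from the generator $\Gs_n$ of Proposition~\ref{GenUn} and to exploit two separate stabilizing mechanisms. Applying the martingale problem to the coordinate functions, and working up to the stopping time $\t_n^k$ where all coefficients become bounded by constants depending only on $k$, one finds that $\Ts_n$ satisfies
\[
d\Ts_n(t) = \Big(-\tfrac{\sqrt{n}}{\s^2}\,\Ts_n(t) + R_n^{(2)}(\Ss_n(t),\Ts_n(t))\Big)\,dt + dN_n(t),
\]
where the quadratic variation of the martingale $N_n$ grows at the bounded rate $4\s^2 + 4\Ts_n/n^{1/4}$, while $\Ss_n$ has quadratic variation exactly $t$ and drift
\[
-\tfrac{n^{1/4}\Ss_n\Ts_n}{2\s^4} + \tfrac{\Ss_n\Ts_n^2}{2\s^6} - \tfrac{\Ss_n^3}{2\s^4} + R_n^{(1)}(\Ss_n,\Ts_n).
\]
Thus, up to $\t_n^k$, the process $\Ts_n$ is an Ornstein--Uhlenbeck-type process with a diverging restoring rate $\sqrt{n}/\s^2$ and a bounded diffusion coefficient, whereas $\Ss_n$ carries a strong quartic self-restoring drift $-\Ss_n^3/(2\s^4)$.

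First I would establish the collapse of $\Ts_n$. Because, up to $\t_n^k$, the diffusion coefficient of $\Ts_n$ is bounded and the remainder $R_n^{(2)}$ is $O(k^2/n^{3/4})$ while the restoring rate $\sqrt{n}/\s^2$ tends to infinity, the collapsing-process result of the appendix applies and yields $\sup_{0\le t\le T}|\Ts_n(t\wedge\t_n^k)|\to 0$ in probability. The same Ornstein--Uhlenbeck estimates give the quantitative bound $\sup_n \E\big[(n^{1/4}\Ts_n(t\wedge\t_n^k))^2\big]<+\infty$, i.e. $n^{1/4}\Ts_n$ stays of order one in $\Ll^2$; this is precisely the stationary scale $\Ts_n\sim n^{-1/4}$ of the collapse.

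Next I would bound the exit probability. The $\Ts_n$-component of the exit is ruled out by the collapse just proved, since $\Ts_n$ stays far below any fixed level $k$. For the $\Ss_n$-component I would use the Lyapunov function $x\mapsto x^2$: applying $\Gs_n$ and stopping at $\t_n^k$, the drift of $\Ss_n^2$ equals $-\Ss_n^4/\s^4 + \big(-n^{1/4}\Ts_n/\s^4 + \Ts_n^2/\s^6\big)\Ss_n^2 + 2\Ss_n R_n^{(1)} + 1$. The quartic term dominates for large $\Ss_n^2$, and the only genuinely dangerous contribution is the multiplicative term $-n^{1/4}\Ts_n\Ss_n^2/\s^4$, whose coefficient is of random sign but, by the moment bound above, bounded in $\Ll^2$ uniformly in $n$. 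Combining a Burkholder--Davis--Gundy estimate for the martingale part with a Gronwall argument (using Cauchy--Schwarz on the coupling term) yields $\sup_n \E\big[\sup_{0\le t\le T}\Ss_n(t\wedge\t_n^k)^2\big]<+\infty$, whence by Doob and Markov the probability that $\Ss_n$ reaches level $k$ before $T$ is at most $C/k^2$. Given $\eps>0$, choosing $k_\eps$ large and then $n_\eps$ large gives $\sup_{n\ge n_\eps}\P(\t_n^{k_\eps}\le T)\le\eps$. The collapse of the full process then follows: on the event $\{\t_n^{k_\eps}>T\}$, of probability at least $1-\eps$, the stopped and unstopped processes coincide on $[0,T]$, so the collapse of $\Ts_n(\cdot\wedge\t_n^{k_\eps})$ transfers to $\Ts_n$, and letting $\eps\to 0$ gives the stated convergence.

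The main obstacle is the circular dependence between the two estimates, together with the coupling term $n^{1/4}\Ss_n\Ts_n$ in the drift of $\Ss_n$. This term cannot simply be discarded: although $\Ts_n$ collapses, it does so exactly at the scale $n^{-1/4}$, so $n^{1/4}\Ts_n$ remains of order one and the term is a genuine order-one multiplicative perturbation of $\Ss_n$. The resolution is to work throughout up to $\t_n^k$, which makes all coefficients bounded and legitimizes both the collapsing-process lemma and the Lyapunov computation, to extract from the Ornstein--Uhlenbeck structure the sharp $\Ll^2$-bound on $n^{1/4}\Ts_n$, and then to lean on the quartic self-restoring drift of $\Ss_n$ to absorb this perturbation; the two controls must be assembled simultaneously rather than sequentially.
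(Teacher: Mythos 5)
Your treatment of $\Ts_n$ follows the paper's line (with two fixable slips: Proposition~\ref{CollapsingProp} is stated for \emph{positive} semimartingales, so it must be applied to $\xi_n=\Ts_n^2$ rather than to the signed process $\Ts_n$; and you never verify the initial-condition hypothesis~(\ref{C2}), which in the paper requires the moment bound $\E\big[\Ts_n(0)^{2d}\big]\leq K_{2d}\,n^{-d/2}$ obtained from Brillinger's theorem). The genuine gap is in your treatment of $\Ss_n$. Your entire exit-bound argument rests on the asserted estimate $\sup_n\E\big[(n^{1/4}\Ts_n(t\wedge\t_n^k))^2\big]<+\infty$, which you never prove. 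It is not delivered by the collapsing proposition, which only yields $\sup_{t\leq T\wedge\t_n^k}\Ts_n(t)^2\lesssim n^{1/(2d)-1/4}$ in probability, i.e.\ a scale roughly $n^{-1/8}$, not the stationary scale $n^{-1/4}$. Nor does the ``Ornstein--Uhlenbeck estimate'' go through as stated: the restoring-drift inequality holds only up to $\t_n^k$, so differentiating $\E\big[\Ts_n(t\wedge\t_n^k)^2\big]$ produces a boundary term of size $\sqrt{n}\,k^2\,\P(\t_n^k\leq t)$, which can only be discarded if one already knows $\P(\t_n^k\leq T)=o(n^{-1/2})$ --- that is, the circularity you acknowledge is not resolved by declaring that the two controls are ``assembled simultaneously''; an actual bootstrap would have to be exhibited. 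Moreover, even granting the $\Ll^2$ bound, the Gronwall step as you describe it does not close: Cauchy--Schwarz on $\E\big[n^{1/4}|\Ts_n|\,\Ss_n^2\big]$ requires $\E\big[\Ss_n^4\big]$ (the classical moment-closure problem), while bounding $\Ss_n^2\leq k^2$ up to $\t_n^k$ yields a constant growing like $k^2$; and the term $\Ss_n^2\Ts_n^2/\s^6$, which you dismiss, is only bounded by $k^2\Ss_n^2/\s^6$ before the stopping time and exponentiates under Gronwall into $e^{Ck^2T}$. Either way the resulting Markov bound is not $C/k^2$ with $C$ uniform in $k$ and $n$, so the exit probability does not tend to $0$ as $k\to\infty$. (Completing the square against the quartic drift, $-x^4/\s^4+n^{1/4}|y|\,x^2/\s^4\leq (n^{1/4}y)^2/(4\s^4)$, would repair the absorption --- but still only modulo the unproven $\Ll^2$ bound.)

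The paper sidesteps all of this with the perturbation it has already built: in the proof of Lemma~\ref{LemArret} it takes $f(x,y)=x^2$ and works with $F_{n,f}(\Ss_n,\Ts_n)=\Ss_n^2\big(1-\Ts_n/(n^{1/4}\s^2)+\Ts_n^2/(\sqrt{n}\s^4)\big)$ instead of $\Ss_n^2$ itself. By Proposition~\ref{GenS_n()}, $\Gs_nF_{n,f}(x,y)=1-x^4/\s^4+\widetilde{R}_{n,f}(x,y)$: the dangerous coupling term $n^{1/4}xy\,\partial_x$ is cancelled \emph{identically} by the correctors $H_f$ and $K_f$, so no moment estimate on $n^{1/4}\Ts_n$ is ever needed. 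The exit bound for $\Ss_n$ then reduces to three soft ingredients: the CLT for $F_{n,f}(\Ss_n(0),\Ts_n(0))$, the uniform smallness of $\widetilde{R}_{n,f}$ on $[-k,k]^2$, and Doob's maximal inequality for the martingale $\Mc^k_{n,f}$, whose $\Ll^2$ bound is supplied by Lemma~\ref{Mnf-martingale}. Your final step --- transferring the collapse from the stopped process to the unstopped one on the event $\{\t_n^{k_{\eps}}>T\}$ --- does match the paper, but as it stands your route to the exit bound has a missing estimate at its core, precisely at the point where the paper's perturbed test function does the work.
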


\begin{proof} Let $k,\eps>0$ and $n\geq 1$. We have
\[\P\left(\t_n^{k}\leq T\right)\leq \P\left(\,\sup_{0\leq t\leq T\wedge \t_n^k}\,\big|\Ts_n(t)\big|\geq \frac{k}{2}\,\right)+\P\left(\,\sup_{0\leq t\leq T\wedge \t_n^k}\,\big|\Ss_n(t)\big|\geq \frac{k}{2}\,\right).\]
We denote $\P(A_n^k)+\P(B_n^k)$ the sum in the right side of this inequality.\medskip

Let us deal with the bound of $\P(A_n^k)$. To this end we would like to apply proposition~\ref{CollapsingProp} in appendix to the positive semimartingale $(\xi_n(t),\,t\geq 0)_{n \geq 1}$ defined by
\[\forall n\geq 1\quad\forall t\geq 0\qquad \xi_n(t)=\Ts_n(t)^2.\]
By applying It\^o's formula, we get
\[d\xi_n(t)=\Gs_n f_0\big(\Ss_n(t),\Ts_n(t)\big)\,dt+n^{1/4}\sum_{i=1}^n \frac{4X_n^i(\sqrt{n}t)}{n^{3/4}}\,\Ts_n(t)\,dB_i(t),\]
with $f_0:(x,y)\longmapsto y^2$. With the notations of proposition~\ref{CollapsingProp}, we have $\smash{\z_n(t)=\Gs_n f_0\big(\Ss_n(t),\Ts_n(t)\big)}$ and $Z_{n,i}(t)=4n^{-1/2} X_n^i(\sqrt{n}t)\,\Ts_n(t)$ for all $t\geq 0$, $n\geq 1$ and $i\in \{1,\dots,n\}$. We have
\begin{align*}
\forall n\geq 1\quad\forall t\in [0,\tau_n^k]\qquad \sum_{i=1}^{n} Z_{n,i}(t)^2 &=16\,\Ts_n(t)^2\,\frac{1}{n}\sum_{i=1}^n X_n^i(\sqrt{n}t)^2\\
&=16\,\Ts_n(t)^2\,\left(\s^2+\frac{\Ts_n(t)}{n^{1/4}}\right).
\end{align*}
Hence condition~(\ref{C4}) of proposition~\ref{CollapsingProp} is verified with $C_5=16k^2(\s^2+k)$. Next, by proposition~\ref{GenUn}, for any $n\geq 1$ and $t\in [0,\tau_n^k]$
\begin{align*}
\z_n(t)&=-\frac{2\sqrt{n}}{\s^2}\,\Ts_n(t)^2+4\s^2+2\,\Ts_n(t){R}^{(2)}_n\left(\Ss_n(t),\Ts_n(t)\right)+\frac{4}{n^{1/4}}\Ts_n(t)\\
&\leq -\frac{2\sqrt{n}}{\s^2}\,\xi_n(t)+4\s^2+2k\,\sup_{\|(x,y)\|\leq k}\,\left|{R}^{(2)}_n(x,y)\right|+\frac{4k}{n^{1/4}}.
\end{align*}
Condition~(\ref{C3}) is then verified with $\kappa_n=\sqrt{n}$ for any $n\geq 1$, $C_2=2/\s^2$,
\[C_4=4\s^2+2k\,\sup_{n\geq 1}\,\sup_{\|(x,y)\|\leq k}\,\left|{R}^{(2)}_n(x,y)\right|+4k<+\infty\]
and $C_3$, $(\beta_n)_{n\geq1}$ may be chosen arbitrarily. We choose $(\beta_n)_{n\geq1}$ such that $\b_n/\k_n$ goes to $0$ when $n$ goes to $+\infty$.\medskip

Let us examine condition (\ref{C2}): we denote $\smash{Y_n^i=\big(X_n^i(0)\big)^2-\s^2}$ for any\linebreak $i\in \{1,\dots,n\}$. Since $\smash{X_n^1(0),\dots,X_n^n(0)}$ are independent random variables with common distribution $\Nc(0,\s^2)$, we get that $Y_n^1,\dots,Y_n^n$ are independent identically distributed random variables which are centered and have finite moments of all orders. Theorem~2~of~\cite{Brillinger} implies that, for any $v\geq 2$, there exists $K_v>0$ such that
\[\forall n\geq 1\qquad \E \left( \Big|Y_n^1+\dots+Y_n^n\big|^v\right)\leq K_v n^{v/2}. \]
Hence, for all $d>1$ and $n\geq 1$, 
\[\E \left[ \Big( \xi_n(0) \Big)^d \right]=\E\left[ \left( \frac{1}{n^{3/4}}\Big(Y_n^1+\dots+Y_n^n\big) \right)^{2d} \right] \leq K_{2d} \,\frac{n^{d}}{n^{3d/2}}= K_{2d} \, n^{-d/2}.\]
Condition~(\ref{C2}) is then satisfied for any $d>1$, with $C_1=K_{2d}$ and $\a_n\leq \sqrt{n}$ for all $n\geq 1$. So that condition~(\ref{C1}) is verified, we choose $d>2$ and $\a_n=n^{1/4}$ for all $n\geq 1$. We have
\[\kappa_n^{\frac{1}{d}} \alpha_n^{-1} \vee \alpha_n \kappa_n^{-1}=n^{1/(2d)-1/4}\vee n^{-1/4}=n^{1/(2d)-1/4}.\]
As a consequence, proposition~\ref{CollapsingProp} implies that there exist $M>0$ and $n_1\geq 1$ such that
\begin{equation}
\sup_{n \geq n_1} \,\P \left(\, \sup_{0 \leq t \leq T \wedge \tau_n^k} \big|\Ts_n(t)\big|^2 > M n^{1/(2d)-1/4}\,\right) \leq \frac{\eps}{2}.
\label{Eq(ConsCollapProc)}
\end{equation}
We increase the value of $n_1$ so that
\[\sup_{n \geq n_1} \,\P \left(\, \sup_{0 \leq t \leq T \wedge \tau_n^k} \big|\Ts_n(t)\big|^2 > \frac{k}{2}\,\right) \leq \frac{\eps}{2}.\]

Let us deal now with the term $\P(B_n^k)$. In the rest of this proof, we assume that $f$ is the function $(x,y)\longmapsto x^2$. We have
\[\forall n\geq 1\qquad\Ss_n(t)^2=F_{n,f}\big(\Ss_n(t),\Ts_n(t)\big)+\frac{\Ss_n(t)^2 \,\Ts_n(t)}{n^{1/4}\s^2}-\frac{\Ss_n(t)^2 \,\Ts_n(t)^2}{\sqrt{n}\s^4},\]
thus
\[\forall n\geq 1\qquad F_{n,f}\big(\Ss_n(t),\Ts_n(t)\big)=\Ss_n(t)^2\left(1-\frac{\Ts_n(t)}{n^{1/4}\s^2}+\frac{\Ts_n(t)^2}{\sqrt{n}\s^4}\right).\]
We obtain that, for $n$ large enough,
\begin{multline*}
\P(B_n^k)=\P \left(\, \sup_{0 \leq t \leq T \wedge \tau_n^k} \big|\Ss_n(t)\big|^2 > \frac{k^2}{4}\,\right)\\
\leq \P \left(\, \sup_{0 \leq t \leq T \wedge \tau_n^k} F_{n,f}\big(\Ss_n(t),\Ts_n(t)\big) > \frac{k^2}{8}\,\right)\\
\leq \P \left(F_{n,f}\big(\Ss_n(0),\Ts_n(0)\big) > \frac{k^2}{24}\,\right)+\P \left(\, \sup_{0 \leq t \leq T \wedge \tau_n^k} \Mc_{n,f}(t) > \frac{k^2}{24}\,\right)\\
+\P \left(\, \sup_{0 \leq t \leq T \wedge \tau_n^k} \Gs_n F_{n,f}\big(\Ss_n(t),\Ts_n(t)\big) > \frac{k^2}{24T}\,\right).
\end{multline*}
For any $n\geq 1$, the random variables $X_n^1(0),\dots,X_n^n(0)$ are independent with common distribution $\Nc(0,\s^2)$ thus, by the Central Limit Theorem, we get $(\Ss_n(0))_{n\geq 1}$ and $(\Ts_n(0))_{n\geq 1}$ converge in distribution to $0$. This implies that, for~$n$ large enough,
\[\P \left(F_{n,f}\big(\Ss_n(0),\Ts_n(0)\big) > \frac{k^2}{24}\,\right)\leq \frac{\eps}{6}.\]
Next proposition~\ref{GenS_n()} gives us
\begin{align*}
\Gs_n F_{n,f}\big(\Ss_n(t),\Ts_n(t)\big)&=1-\frac{\Ss_n(t)^4}{\s^4}+\widetilde{R}_{n,f}\big(\Ss_n(t),\Ts_n(t)\big)\\
&\leq 1+\left|\widetilde{R}_{n,f}\big(\Ss_n(t),\Ts_n(t)\big)\right|.
\end{align*}
and
\[\lim_{n\to+\infty}\, \sup_{\|(u,v)\|\leq k}\,\left|\smash{\widetilde{R}_{n,f}}(u,v)\right|=0.\]
If we choose $k>\sqrt{24T}$ and $n$ large enough, then
\begin{multline*}
\P \left(\, \sup_{0 \leq t \leq T \wedge \tau_n^k} \Gs_n F_{n,f}\big(\Ss_n(t),\Ts_n(t)\big) > \frac{k^2}{24T}\,\right)\\
\leq \P \left(1+\sup_{\|(u,v)\|\leq k}\,\left|\smash{\widetilde{R}_{n,f}}(u,v)\right| > \frac{k^2}{24T}\,\right)\leq \frac{\eps}{6}.
\end{multline*}
Finally, by lemma~\ref{Mnf-martingale}, $\smash{\Mc_{n,f}^k}$ is a martingale thus Doob's maximal inequality implies
\[\P \left(\, \sup_{0 \leq t \leq T \wedge \tau_n^k} \Mc_{n,f}(t) > \frac{k^2}{24}\,\right)\leq \frac{\E\left(\Mc_{n,f}^k(T)^2\right)}{(k^2/24)^2}.\]
Lemma~\ref{Mnf-martingale} also implies that $\smash{\big(\E\big(\Mc_{n,f}^k(T)^2\big)\big)_{n\geq 1}}$ is a bounded sequence. Hence, for $k$ large enough,
\[\P \left(\, \sup_{0 \leq t \leq T \wedge \tau_n^k} \Mc_{n,f}(t) > \frac{k^2}{24}\,\right)\leq \frac{\eps}{6}.\]
As a consequence, there exist $n_2\geq 1$ and $k_{\eps}\geq 1$ such that $\P(B_n^{k_{\eps}})\leq \eps/2$ for all $n\geq n_2$. We denote $n_{\eps}=n_1\vee n_2$. We have proved that
\[\forall n\geq n_{\eps}\qquad \P\left(\t_n^{k_{\eps}}\leq T\right)\leq \P(A_n^{k_{\eps}})+\P(B_n^{k_{\eps}})\leq \eps.\]

Let us prove the second assertion of the lemma: for any $\eta>0$, we have
\[\P\left(\,\sup_{0\leq t\leq T}\,\big|\Ts_n(t)\big|>\eta\,\right)\leq\P\left(\,\sup_{0\leq t\leq T \wedge \tau_n^{k_{\eps}}}\,\big|\Ts_n(t)\big|^2>\eta^2\,\right)+\P\left(\t_n^{k_{\eps}}\leq T\right).\]
By formula~(\ref{Eq(ConsCollapProc)}), for $n$ large enough,
\[\P\left(\,\sup_{0\leq t\leq T}\,\big|\Ts_n(t)\big|>\eta\,\right)\leq\frac{\eps}{2}+\P\left(\t_n^{k_{\eps}}\leq T\right)\leq \frac{3\eps}{2}.\]
By letting $\eps$ goes to $0$, we obtain that $\smash{\big(\Ts_n(t),\,t\geq 0\big)_{n\geq 1}}$ collapses to zero. This ends the proof of the lemma.
\end{proof}

\subsubsection{Tightness of $(\Ss_n(t),\,t\geq 0)_{n\geq 1}$ in $\Dc([0,T],\R)$}

Since $(X_n(t),\,0\leq t\leq T)$, $n\geq 1$, and the limiting process $(\Uc(t),\,0\leq t\leq T)$ belong to $C([0,T],\R)$, it is enough to prove that
$\smash{(\Ss_n(t),\,t\geq 0)_{n\geq 1}}$ is relatively compact for the weak convergence in $\Dc([0,T],\R)$, which is a Polish space (see~theorem~12.2 of~\cite{BillCVPM}). Prohorov theorem (theorem~5.1 of~\cite{BillCVPM}) implies that it is enough to prove that $\smash{(\Ss_n(t),\,t\geq 0)_{n\geq 1}}$ is a tight sequence. As in~\cite{ComEis} and~\cite{DaiPra}, we use the following tightness criterion:\medskip

\begin{prop} A sequence $(\xi_n(t),\,0\leq t\leq T)_{n\geq 1}$ on $\Dc([0,T],\R)$ is tight if\smallskip

\textbf{(a)} for any $\eps>0$, there exists $M>0$ such that
\[\sup_{n\geq 1}\,\P\left(\,\sup_{0\leq t\leq T}\,\big|\xi_n(t)\big|\geq M\,\right)\leq \eps,\]

\textbf{(b)} for any $\eps>0$ and $\eta>0$, there exists $\d>0$ such that
\[\sup_{n\geq 1}\,\sup_{\gfrac{\t_1,\t_2 \in \Tc_n}{0\leq \t_1\leq\t_2\leq (\t_1+\d)\wedge T}}\,\P\left(\,\big|\xi_n(\t_2)-\xi_n(\t_1)\big|\geq \eta\,\right)\leq \eps,\]
where, for any $n\geq 1$, $\Tc_n$ is the set of all the stopping times adapted to the filtration generated by the process $\xi_n$.
\label{CritereTension}
\end{prop}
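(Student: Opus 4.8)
The plan is to combine Prohorov's theorem with the classical characterisation of relatively compact subsets of the Skorokhod space. Since $\Dc([0,T],\R)$ is Polish, Prohorov's theorem reduces the tightness of $(\xi_n)_{n\geq 1}$ to the following statement: for every $\eps>0$ there exists a relatively compact set $K\subset \Dc([0,T],\R)$ such that $\inf_{n\geq 1}\P(\xi_n\in K)\geq 1-\eps$. I would rely on the standard fact that $A\subset\Dc([0,T],\R)$ is relatively compact if and only if $\sup_{x\in A}\|x\|_{\infty}<+\infty$ and $\lim_{\d\to 0}\sup_{x\in A}w'(x,\d)=0$, where $w'(x,\d)$ denotes the Skorokhod modulus, i.e. the infimum, over the partitions $0=t_0<\dots<t_r=T$ with $\min_i(t_i-t_{i-1})>\d$, of the largest oscillation of $x$ over a block $[t_{i-1},t_i)$. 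In this way the proof splits into two probabilistic estimates: a uniform bound on $\|\xi_n\|_\infty$, and a uniform control of $w'(\xi_n,\d)$ as $\d\to 0$.

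The first estimate is exactly hypothesis \textbf{(a)}: given $\eps>0$ it provides $M>0$ with $\sup_{n\geq 1}\P(\sup_{0\leq t\leq T}|\xi_n(t)|\geq M)\leq \eps/2$, which is the required compact containment. The heart of the matter is therefore to show that hypothesis \textbf{(b)} yields, for all $\eta,\eps'>0$, a $\d>0$ with $\sup_{n\geq 1}\P(w'(\xi_n,\d)\geq 2\eta)\leq\eps'$. To this end I would use the first-passage partition. Fixing $\eta>0$, I set $\t_0=0$ and, recursively, $\t_{k+1}=\inf\{t>\t_k:|\xi_n(t)-\xi_n(\t_k)|>\eta\}\wedge T$; each $\t_k$ is a stopping time of the natural filtration of $\xi_n$, hence lies in $\Tc_n$, and by construction the oscillation of $\xi_n$ over each block $[\t_{k-1},\t_k)$ is at most $2\eta$. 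Consequently, if every gap $\t_k-\t_{k-1}$ exceeds $\d$, the family $\{\t_k\}$ is an admissible $\d$-sparse partition witnessing $w'(\xi_n,\d)\leq 2\eta$; thus
\[\{w'(\xi_n,\d)>2\eta\}\ \subseteq\ \bigcup_{k\geq 1}\big\{\t_k-\t_{k-1}\leq\d,\ \t_{k-1}<T\big\}.\]

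It remains to bound this union. For a fixed index $k$, on the event $\{\t_k-\t_{k-1}\leq\d,\ \t_{k-1}<T\}$ one has $|\xi_n(\t_k)-\xi_n(\t_{k-1})|\geq\eta$ with $\t_{k-1}\leq\t_k\leq(\t_{k-1}+\d)\wedge T$, so applying \textbf{(b)} to the pair $(\t_{k-1},\t_k\wedge(\t_{k-1}+\d))\in\Tc_n\times\Tc_n$ bounds the probability of this event by the quantity appearing in \textbf{(b)}, which can be made smaller than any prescribed level $\eps''$ by taking $\d$ small, uniformly in $n$ and $k$. A union bound then requires controlling the number $N_n=\#\{k\geq 1:\t_k<T\}$ of $\eta$-oscillations before time $T$, uniformly in $n$, and this is the main obstacle. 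One first fixes a coarse scale $\d_1>0$ (again from \textbf{(b)}) at which an increment exceeds $\eta$ with probability at most $\tfrac12$; since at most $T/\d_1$ of the gaps can exceed $\d_1$, all but $T/\d_1$ of the $\eta$-moves must be "fast" (gap $\leq\d_1$), and the number of fast moves is controlled, uniformly in $n$, by a Lenglart-type domination that upgrades the per-step bounds from \textbf{(b)} into a tail estimate $\sup_{n\geq 1}\P(N_n\geq K)\to 0$ as $K\to+\infty$. Splitting the union at the threshold $K$ then gives $\sup_n\P(w'(\xi_n,\d)>2\eta)\leq K\eps''+\sup_n\P(N_n\geq K)$, which is made $\leq\eps'$ by first choosing $K$ large and then $\d$ small; a small additional care at the right endpoint $T$ (handled by right-continuity of the paths and \textbf{(b)} applied to the deterministic pair $((T-\d)\vee 0,\,T)$) completes the control of $w'$.

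Finally I would assemble the compact set. Given $\eps>0$, take $M$ from \textbf{(a)} and, for each $m\geq 1$, a $\d_m>0$ with $\sup_n\P(w'(\xi_n,\d_m)\geq 1/m)\leq\eps\,2^{-m-1}$. The set $K=\{x:\|x\|_\infty\leq M,\ w'(x,\d_m)\leq 1/m\ \text{for all }m\geq 1\}$ is relatively compact by the characterisation above, and $\inf_{n\geq 1}\P(\xi_n\in K)\geq 1-\eps$ by the two families of estimates; Prohorov's theorem then yields tightness. The genuinely delicate point, as flagged, is the uniform bound on the number of $\eta$-oscillations: the increment control \textbf{(b)} is a statement about individual stopping times, and turning it into control of the clustering and of the number of oscillations of the whole path is precisely where Aldous's argument (here through Lenglart domination together with the finite time budget $T$) does the real work.
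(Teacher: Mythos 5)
The paper does not actually prove Proposition~\ref{CritereTension}: it is Aldous's tightness criterion, invoked as a known tool with the references \cite{ComEis} and \cite{DaiPra}, so the benchmark for your attempt is the classical proof (Aldous; Billingsley, Theorem~16.10; Ethier--Kurtz, Chapter~3, Theorem~8.6). Your outline follows exactly that route --- hypothesis (a) for the compact containment, the characterisation of relatively compact subsets of $\Dc([0,T],\R)$ through the modulus $w'$, the first-passage times $\tau_k$ at oscillation level $\eta$, the inclusion $\{w'(\xi_n,\delta)>2\eta\}\subseteq\bigcup_k\{\tau_k-\tau_{k-1}\leq\delta,\ \tau_{k-1}<T\}$, and hypothesis (b) applied to the pairs $(\tau_{k-1},\,\tau_k\wedge(\tau_{k-1}+\delta))$ --- so the architecture is the correct one.

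However, the step you yourself flag as carrying the real work is wrong as written, in two ways. First, Lenglart domination is not available here: hypothesis (b) only bounds \emph{unconditional} probabilities at pairs of stopping times; it provides neither conditional bounds given $\Fc_{\tau_{k-1}}$ nor any control of a predictable compensator of the counting process of $\eta$-moves, so there is no dominating process to apply Lenglart to. Second, with your coarse scale $\delta_1$ fixed at per-step level $\tfrac12$, the honest first-moment count gives $\P(N_n\geq K)=\P(\tau_K<T)\leq (K/2)/(K-T/\delta_1)$, which tends to $\tfrac12$ as $K\to+\infty$, not to $0$: the asserted tail estimate $\sup_n\P(N_n\geq K)\to0$ is false at fixed $\delta_1$. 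The classical repair inverts your quantifiers: given the target $\eps'$, take $\delta_1$ from (b) at level $\eps'/4$, so that $\P(\tau_k-\tau_{k-1}\leq\delta_1,\ \tau_k<T)\leq\eps'/4$ uniformly in $n$ and $k$; on $\{\tau_K<T\}$ at most $T/\delta_1$ gaps exceed $\delta_1$, hence at least $K-T/\delta_1$ gaps are fast, and Markov's inequality applied to the number of fast gaps yields $\P(\tau_K<T)\leq (K\,\eps'/4)/(K-T/\delta_1)\leq\eps'/2$ once $K\geq 2T/\delta_1$ --- so $\delta_1$ and $K$ must be chosen \emph{jointly} from $\eps'$, and a plain first moment suffices, with no domination theorem. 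Separately, your endpoint fix does not work: the residual event $\{\tau_{k-1}<T,\ \tau_k=T,\ T-\tau_{k-1}\leq\delta\}$ carries no $\eta$-increment at all, and (b) at the deterministic pair $((T-\delta)\vee0,\,T)$ controls a single increment, not the oscillation of the short last block. For continuous paths (the only case used in this paper) one simply merges the last two blocks, at the harmless price of bounding $w'$ by $4\eta$ instead of $2\eta$; for general c\`adl\`ag paths this endpoint effect is precisely why the textbook proofs either work with a modulus whose final block may be cut at $T$ (Ethier--Kurtz) or treat jumps near $T$ by a separate straddling argument. Both gaps are repairable by this standard bookkeeping, but as written your union-bound step does not close.
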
\medskip

\begin{lem} The sequence $\left(\Ss_n(t),\,0\leq t\leq T\right)_{n\geq 1}$ is relatively compact for the weak convergence on $\Dc([0,T],\R)$.
\label{LemTension}
\end{lem}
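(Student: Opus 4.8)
The plan is to verify the two conditions of Proposition~\ref{CritereTension} for the sequence $\smash{(\Ss_n(t),\,0\le t\le T)_{n\ge 1}}$, using the martingale decomposition coming from the perturbed function $F_{n,f}$ applied to the smooth function $f:x\mapsto x$, together with the exit-time control of Lemma~\ref{LemArret}. Condition~(a) is almost immediate from that lemma: given $\eps>0$, take $k_\eps$ and $n_\eps$ as provided, so that $\sup_{n\ge n_\eps}\P(\t_n^{k_\eps}\le T)\le \eps$. On the event $\{\t_n^{k_\eps}>T\}$ one has $\sup_{0\le t\le T}\big|\Ss_n(t)\big|<k_\eps$ by the very definition of $\t_n^{k_\eps}$, hence $\P\big(\sup_{t\le T}|\Ss_n(t)|\ge k_\eps\big)\le \eps$ for $n\ge n_\eps$; the finitely many indices $n<n_\eps$ have continuous sample paths on the compact $[0,T]$, so enlarging $M$ covers them as well.

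The real work is condition~(b). For $f:x\mapsto x$ one has $G_{\s}f(x)=-x^3/(2\s^4)$, and Proposition~\ref{GenS_n()} gives $\Gs_n F_{n,f}=G_{\s}f+\widetilde{R}_{n,f}$. For stopping times $\t_1\le \t_2\le (\t_1+\d)\wedge T$ adapted to the filtration generated by $\Ss_n$ (which is contained in the Brownian filtration, so that optional stopping applies), I would write
\[\Ss_n(t)=F_{n,f}\big(\Ss_n(t),\Ts_n(t)\big)-\frac{1}{n^{1/4}}H_f\big(\Ss_n(t),\Ts_n(t)\big)-\frac{1}{\sqrt{n}}K_f\big(\Ss_n(t),\Ts_n(t)\big),\]
together with
\[F_{n,f}\big(\Ss_n(\t_2),\Ts_n(\t_2)\big)-F_{n,f}\big(\Ss_n(\t_1),\Ts_n(\t_1)\big)=\int_{\t_1}^{\t_2}\Gs_n F_{n,f}\big(\Ss_n(s),\Ts_n(s)\big)\,ds+\Mc_{n,f}(\t_2)-\Mc_{n,f}(\t_1).\]
Working on the event $\{\t_n^k>T\}$ confines $\big(\Ss_n,\Ts_n\big)$ to $[-k,k]^2$ throughout $[0,T]$. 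There the drift $\Gs_n F_{n,f}$ is bounded by a constant $C_k$ (since $G_{\s}f$ is bounded on $[-k,k]$ and $\widetilde{R}_{n,f}\to 0$ uniformly on compacts), so the drift increment is at most $C_k\,\d$; the quadratic-variation estimate inside the proof of Lemma~\ref{Mnf-martingale} gives $\langle \Mc_{n,f}^k\rangle_{\t_2}-\langle \Mc_{n,f}^k\rangle_{\t_1}\le C_f^k\,\d$ uniformly in $n$, whence $\E\big[(\Mc_{n,f}^k(\t_2)-\Mc_{n,f}^k(\t_1))^2\big]\le C_f^k\,\d$ and Markov's inequality controls the martingale increment. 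The corrections $n^{-1/4}H_f$ and $n^{-1/2}K_f$ are bounded on $[-k,k]^2$ and carry vanishing prefactors, so they are negligible for $n$ large.

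Assembling these, for fixed $\eps,\eta>0$ I would first fix $k=k_\eps$ so that $\P(\t_n^{k}\le T)\le \eps/2$ for $n\ge n_\eps$ (Lemma~\ref{LemArret}), then choose $\d$ small enough that the drift and martingale contributions on $\{\t_n^{k}>T\}$ each have probability $\le \eps/4$, and finally absorb the finitely many indices $n<n_\eps$ by further shrinking $\d$ (each such $\Ss_n$ being a.s.\ uniformly continuous on $[0,T]$). Condition~(b) then holds, and relative compactness in $\Dc([0,T],\R)$ follows by Prohorov's theorem, as in Proposition~\ref{CritereTension}.

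The main obstacle is condition~(b): one cannot bound the increments of $\Ss_n$ directly, because the naive generator $\Gs_n$ from Proposition~\ref{GenUn} contains an exploding drift of order $n^{1/4}$. The entire point of replacing $f$ by the perturbed function $F_{n,f}$ is that this term is cancelled, leaving a drift that is bounded on compacts; combined with the linear-in-time quadratic-variation bound of Lemma~\ref{Mnf-martingale}, this is precisely what makes the increment estimate uniform in $n$. The remaining difficulty is the careful bookkeeping that couples the exit event $\{\t_n^{k}\le T\}$ with the separate treatment of the finitely many small-$n$ indices.
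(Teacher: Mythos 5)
Your proposal is correct and takes essentially the same route as the paper: condition~(a) comes from the exit-time control of Lemma~\ref{LemArret}, and condition~(b) from the decomposition of $\Ss_n$ through the perturbed function $F_{n,f}$ with $f:x\mapsto x$ (cancelling the $n^{1/4}$ drift), the compact-set bounds on $G_\sigma f$, $\widetilde{R}_{n,f}$, $H_f$, $K_f$, and the optional-stopping plus quadratic-variation estimate $\E\big[(\Mc^{k}_{n,f}(\tau_2)-\Mc^{k}_{n,f}(\tau_1))^2\big]\le C_f^{k}\,\delta$ from Lemma~\ref{Mnf-martingale}, followed by Markov's inequality. Your only (harmless) deviations are deriving~(a) directly from the definition of $\tau_n^{k}$ rather than via the intermediate bound on $\P(B_n^k)$, and treating the finitely many indices $n<n_\eps$ explicitly, which the paper leaves implicit.
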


\begin{proof} It is enough to prove that $\left(\Ss_n(t),\,0\leq t\leq T\right)_{n\geq 1}$ verifies conditions $(a)$ and $(b)$ of proposition~\ref{CritereTension}. In the proof of lemma~\ref{LemArret}, we proved that, for any $\a>0$, there exists $k_{\a}>0$ and $n_{\a}\geq 1$ such that
\[\sup_{n\geq n_{\a}}\,\P\left(\t_n^{k_{\a}}\leq T\right)\leq \a\]
and, for all $n\geq n_{\a}$,
\[\P \left(\, \sup_{0 \leq t \leq T \wedge \tau_n^{k_{\a}}} \big|\Ss_n(t)\big| > \frac{k_{\a}}{2}\,\right)\leq\frac{\a}{2}. \]
We give ourselves $\eps>0$ and we denote $\a=2\eps/3$. We obtain that, for all $n\geq n_{\a}$,
\begin{multline*}
\P \left(\, \sup_{0 \leq t \leq T}\, \big|\Ss_n(t)\big| > \frac{k_{\a}}{2}\,\right)\\
\leq \P \left(\, \sup_{0 \leq t \leq T \wedge \tau_n^{k_{\eps}}}\big|\Ss_n(t)\big| > \frac{k_{\a}}{2}\,\right)+\P\left(\t_n^{k_{\a}}\leq T\right)\leq\frac{3\a}{2}=\eps.
\end{multline*}
Hence condition~$(a)$ is verified.\medskip

We prove now condition~$(b)$: we give ourselves $n\geq 1$ and $\eps,\eta,\d>0$. Let $\t_1$ and~$\t_2$ be two stopping times adapted to the filtration generated by the process~$\Ss_n$ and such that $0\leq \t_1\leq\t_2\leq (\t_1+\d)\wedge T$. Setting $\a=2\eps/3$, we have
\begin{align*}
\P\big(\big|\Ss_n(\t_2)&-\Ss_n(\t_1)\big|\geq \eta\big)\\
&\leq \P\left(\big|\Ss_n(\t_2\wedge \t_n^{k_{\a}})-\Ss_n(\t_1\wedge \t_n^{k_{\a}})\big|\geq \eta\right)+\P\left(\t_n^{k_{\a}}\leq T\right)\\
& \leq\frac{1}{\eta}\,\E\left(\,\big|\Ss_n(\t_2\wedge \t_n^{k_{\a}})-\Ss_n(\t_1\wedge \t_n^{k_{\a}})\big|\,\right)+\a,
\end{align*}
where we used Markov's inequality. In the rest of this proof, we assume that $f$ is the function $(x,y)\longmapsto x$. We have
\begin{multline*}
\big|\Ss_n(\t_2\wedge \t_n^{k_{\a}})-\Ss_n(\t_1\wedge \t_n^{k_{\a}})\big|\\
\leq\frac{1}{n^{1/4}}\left|H_f\big(\Ss_n(\t_2\wedge \t_n^{k_{\a}}),\Ts_n(\t_2\wedge \t_n^{k_{\a}})\big)-H_f\big(\Ss_n(\t_1\wedge \t_n^{k_{\a}}),\Ts_n(\t_1\wedge \t_n^{k_{\a}})\big)\right|\\
+\frac{1}{\sqrt{n}}\left|K_f\big(\Ss_n(\t_2\wedge \t_n^{k_{\a}}),\Ts_n(\t_2\wedge \t_n^{k_{\a}})\big)-K_f\big(\Ss_n(\t_1\wedge \t_n^{k_{\a}}),\Ts_n(\t_1\wedge \t_n^{k_{\a}})\big)\right|\\
+\int_{\t_1\wedge \t_n^{k_{\a}}}^{\t_2\wedge \t_n^{k_{\a}}}\left|\Gs_n F_{n,f}\big(\Ss_n(u),\Ts_n(u)\big)\right|\,du+\left|\Mc_{n,f}^{k_{\a}}(\t_2)-\Mc_{n,f}^{k_{\a}}(\t_1)\right|.
\end{multline*}
We have
\begin{align*}
\E\left(\,\left|\Mc_{n,f}^{k_{\a}}(\t_2)-\Mc_{n,f}^{k_{\a}}(\t_1)\right|\,\right)^2
&\leq \E\left(\,\left(\Mc_{n,f}^{k_{\a}}(\t_2)-\Mc_{n,f}^{k_{\a}}(\t_1)\right)^2\,\right)\\
&=\E\left(\,\E\bigg[\,\left(\Mc_{n,f}^{k_{\a}}(\t_2)-\Mc_{n,f}^{k_{\a}}(\t_1)\right)^2\,\bigg|\,\Gc_{\t_1}^n\,\bigg]\,\right),
\end{align*}
where $\smash{\Gc_t^{n}=\s\big(\Mc_{n,f}^{k_{\a}}(s),\,0\leq s\leq t\big)}$ for all $t\geq 0$. By lemma~\ref{Mnf-martingale}, $\smash{\Mc_{n,f}^{k_{\a}}}$ is a martingale bounded over $\Ll^2$ thus it is uniformly integrable. Martingale Stopping Theorem (theorem~3.16~of~\cite{LeGall}) implies that
\[\Mc_{n,f}^{k_{\a}}(\t_1)=\E\bigg[\,\Mc_{n,f}^{k_{\a}}(\t_2)\,\bigg|\,\Gc_{\t_1}^n\,\bigg].\]
Hence
\begin{multline*}
\E\bigg[\,\left(\Mc_{n,f}^{k_{\a}}(\t_2)-\Mc_{n,f}^{k_{\a}}(\t_1)\right)^2\,\bigg|\,\Gc_{\t_1}^n\,\bigg]\\=\E\bigg[\,\Mc_{n,f}^{k_{\a}}(\t_2)^2\,\bigg|\,\Gc_{\t_1}^n\,\bigg]+\Mc_{n,f}^{k_{\a}}(\t_1)^2-2\Mc_{n,f}^{k_{\a}}(\t_1)\E\bigg[\,\Mc_{n,f}^{k_{\a}}(\t_2)\,\bigg|\,\Gc_{\t_1}^n\,\bigg]\\
=\E\bigg[\,\Mc_{n,f}^{k_{\a}}(\t_2)^2\,\bigg|\,\Gc_{\t_1}^n\,\bigg]-\Mc_{n,f}^{k_{\a}}(\t_1)^2
\end{multline*}
and thus
\begin{multline*}
\E\bigg(\E\bigg[\,\left(\Mc_{n,f}^{k_{\a}}(\t_2)-\Mc_{n,f}^{k_{\a}}(\t_1)\right)^2\,\bigg|\,\Gc_{\t_1}^n\,\bigg]\bigg)\\
=\E\bigg(\E\bigg[\,\langle \Mc_{n,f}^{k_{\a}}, \Mc_{n,f}^{k_{\a}}\rangle_{\t_2}-\langle \Mc_{n,f}^{k_{\a}}, \Mc_{n,f}^{k_{\a}}\rangle_{\t_1}\,\bigg|\,\Gc_{\t_1}^n\,\bigg]\bigg)\\
=\E\bigg(\sqrt{n}\sum_{j=1}^n\int_{\t_1\wedge\t_n^{k_{\a}}}^{\t_2\wedge\t_n^{k_{\a}}}\left(\frac{\partial F_{n,f}}{\partial x_j}\right)^2\big(X_n(\sqrt{n}u)\big)\,du\bigg)\leq C_f^{k_{\a}} \d,
\end{multline*}
where $C_f^{k_{\a}}$ is the constant introduced in the proof of lemma~\ref{Mnf-martingale} for $k=k_{\a}$. We~get
\[\E\left(\,\left|\Mc_{n,f}^{k_{\a}}(\t_2)-\Mc_{n,f}^{k_{\a}}(\t_1)\right|\,\right)\leq \sqrt{C_f^{k_{\a}} \d}.\]
Next, since $f:(x,y)\longmapsto x$, proposition~\ref{GenS_n()} yields
\[\Gs_n F_{n,f}\big(\Ss_n(t),\Ts_n(t)\big)=-\frac{\Ss_n(t)^3}{2\s^4}+\smash{\widetilde{R}_{n,f}}\big(\Ss_n(t),\Ts_n(t)\big)\]
and
\[\forall k>0\qquad \lim_{n\to+\infty}\, \sup_{\|(x,y)\|\leq k}\,\big|\smash{\widetilde{R}_{n,f}}(x,y)\big|=0.\]
Therefore
\[\int_{\t_1\wedge\t_n^{k_{\a}}}^{\t_2\wedge\t_n^{k_{\a}}}\left|\Gs_n F_{n,f}\big(\Ss_n(u),\Ts_n(u)\big)\right|\,du\leq \left(\frac{k_{\a}^3}{2\s^4}+\sup_{\|(x,y)\|\leq k_{\a}}\,\big|\smash{\widetilde{R}_{n,f}}(x,y)\big|\right)\d.\]
Finally
\[\left|H_f\big(\Ss_n(\t_2\wedge \t_n^{k_{\a}}),\Ts_n(\t_2\wedge \t_n^{k_{\a}})\big)-H_f\big(\Ss_n(\t_1\wedge \t_n^{k_{\a}}),\Ts_n(\t_1\wedge \t_n^{k_{\a}})\big)\right|\leq \frac{k_{\a}^2}{\s^2}\]
and
\[\left|K_f\big(\Ss_n(\t_2\wedge \t_n^{k_{\a}}),\Ts_n(\t_2\wedge \t_n^{k_{\a}})\big)-K_f\big(\Ss_n(\t_1\wedge \t_n^{k_{\a}}),\Ts_n(\t_1\wedge \t_n^{k_{\a}})\big)\right|\leq \frac{3k_{\a}^3}{4\s^4}.\]
Hence, for $n$ large enough and $\d$ small enough,
\[\E\left(\,\big|\Ss_n(\t_2\wedge \t_n^{k_{\a}})-\Ss_n(\t_1\wedge \t_n^{k_{\a}})\big|\,\right)\leq \frac{\eta\a}{2}.\]
We obtain
\[\P\left(\,\big|\Ss_n(\t_2)-\Ss_n(\t_1)\big|\geq \eta\,\right)\leq \frac{3\a}{2}=\eps.\]
Condition~$(b)$ of proposition~\ref{CritereTension} is then satisfied and this ends the proof of the lemma.
\end{proof}

\subsubsection{Identification of the limiting process and convergence}

Let us identify the limiting process. By lemma~\ref{LemTension}, there exists a subsequence $\smash{\big(\Ss_{m_n}(t),\,t\geq 0\big)_{n\geq 1}}$ which converges in distribution to some process $(\Uc(t),t\geq 0)$ on $\Dc([0,T],\R)$. By lemma~\ref{LemArret}, $\smash{\big(\Ts_{m_n}(t),\,t\geq 0\big)_{n\geq 1}}$ converges in distribution to the null process on $\Dc([0,T],\R)$.\medskip

For $k>0$, we introduce the stopping time
\[\widetilde{\t}^k_n=\min\,\left(\,T\,,\,\inf_{t\geq 0}\,\Big\{\,\,\big|\Ts_n(t)\big|\geq k\,\Big\}\,\right).\]
If $t\geq T$ then $\P(\widetilde{\t}^k_n\leq t)=1$ and, if $t<T$, then
\[\lim_{n\to+\infty}\P(\widetilde{\t}^k_n\leq t)\leq \lim_{n\to+\infty}\P\left(\,\sup_{0\leq t\leq T}\,\big|\Ts_n(t)\big|\geq k\,\right)=0,\]
by lemma~\ref{LemArret}. As a consequence $\smash{(\widetilde{\t}^k_n)_{n\geq 1}}$ converges in distribution to $T$.\medskip

We give ourselves $f\in C^4(\R)$. For any $n\geq 1$ and $t\in[0,T]$,
\[F_{n,f}\big(\Ss_n(t),\Ts_n(t)\big)=f\big(\Ss_n(t)\big)+\left(\frac{1}{n^{1/4}}H_f+\frac{1}{n^{1/2}}K_f\right)\big(\Ss_n(t),\Ts_n(t)\big),\]
the functions $H_f$ and $K_f$ being continuous. Next, proposition~\ref{GenS_n()} implies that, for any $n\geq 1$ and $t\in [0,T]$,
\[\Gs_n F_{n,f}\big(\Ss_n(t),\Ts_n(t)\big)=G_{\s}f\big(\Ss_n(t)\big)+\smash{\widetilde{R}_{n,f}}\big(\Ss_n(t),\Ts_n(t)\big),\]
where $\smash{\widetilde{R}_{n,f}}$ is a continuous function on $\R^2$ such that
\[\forall k>0 \qquad \lim_{n\to+\infty}\, \sup_{\gfrac{(x,y) \in \R^2}{\|(x,y)\|\leq k}}\,\big|\smash{\widetilde{R}_{n,f}}(x,y)\big|=0.\]
Let $k>0$. For any $t\geq 0$, we obtain
\[\Mc_{m_n,f}(t\wedge \widetilde{\t}^k_n)\overset{\loi}{\underset{n\to +\infty}{\longrightarrow}}\Mc_f(t )=f(\Uc(t\wedge T))-f(\Uc(0))-\int_0^{t\wedge T} G_{\s}f(\Uc(s))\,ds.\]
For all $n\geq 1$ and $t\in [0,T]$, we have
\[\langle \Mc_{n,f}(\,\cdot\,\wedge \widetilde{\t}^k_n),\Mc_{n,f}(\,\cdot\,\wedge \widetilde{\t}^k_n)\rangle_t=\sqrt{n}\sum_{j=1}^n\int_0^{t\wedge \widetilde{\t}^k_n}\left(\frac{\partial \Psi_{n,f}}{\partial x_j}\right)^2\big(X_n(\sqrt{n}s)\big)\,ds,\]
and, using formula~(\ref{Psi(n,f)}), we get
\begin{multline*}
\sqrt{n}\sum_{j=1}^n\left(\frac{\partial \psi_{n,f}}{\partial x_j}\right)^2\big(X_n(\sqrt{n}\,\cdot\,)\big)\\[-0.3cm]
=\left(f'\big(\Ss_n\big)+\left[\frac{1}{n^{1/4}}\frac{\partial H_f}{\partial x}+\frac{1}{n^{1/2}}\frac{\partial K_f}{\partial x}\right]\big(\Ss_n,\Ts_n\big)\right)^2\\
\qquad\qquad+\left(\frac{4\Ts_n}{n^{1/4}}+4\s^2\right)\left(\left[\frac{1}{n^{1/4}}\frac{\partial H_f}{\partial y}+\frac{1}{n^{1/2}}\frac{\partial K_f}{\partial y}\right]\big(\Ss_n,\Ts_n\big)\right)^2\\
\qquad\qquad\qquad+\frac{4\Ss_n}{n^{1/4}}\left(f'\big(\Ss_n\big)+\left[\frac{1}{n^{1/4}}\frac{\partial H_f}{\partial x}+\frac{1}{n^{1/2}}\frac{\partial K_f}{\partial x}\right]\big(\Ss_n,\Ts_n\big)\right)\\
\qquad\qquad\qquad\qquad\qquad\times\left(\left[\frac{1}{n^{1/4}}\frac{\partial H_f}{\partial y}+\frac{1}{n^{1/2}}\frac{\partial K_f}{\partial y}\right]\big(\Ss_n,\Ts_n\big)\right).
\end{multline*}
Assume that $f$ has a compact support. Then we observe that there exists a constant $\widetilde{C}^k_f$ such that
\[\big|\Ts_n(t)\big|\leq k\qquad\Longrightarrow\qquad \sqrt{n}\sum_{j=1}^n\left(\frac{\partial \psi_{n,f}}{\partial x_j}\right)^2\big(X_n(\sqrt{n}t)\big)\leq \widetilde{C}^k_f.\]
As a consequence $\smash{\Mc_{n,f}(\,\cdot\,\wedge \widetilde{\t}^k_n)}$ is a martingale and
\[\forall t\geq 0\qquad\sup_{n\geq 1}\,\E\left(\Mc_{n,f}(t\wedge \widetilde{\t}^k_n)^2\right)\leq \widetilde{C}^k_f T <+\infty.\]
This implies that, for all $t\geq 0$, $\smash{\big(\Mc_{m_n,f}(t\wedge \widetilde{\t}^k_{m_n})\big)_{n\geq 1}}$ is an uniformly integrable family. Therefore $\Mc_f$ is a martingale.\medskip

Theorem~1.7 of chapter~8 of~\cite{EKurtz} implies that the martingale problem associated to $\{\,(f,G_{\s}f) : f\in C_c^{\infty}(\R)\,\}$ admits a unique solution: it is the strong solution of the differential stochastic equation
\[dz(t)=-\frac{z^3(t)}{2\s^4}\,dt+dB(t),\qquad z(0)=0,\]
where $(B(t),\,t\geq 0)$ is a standard Brownian motion. As a consequence the limiting process $(\Uc(t),\,0\leq t\leq T)$ is uniquely determined. Therefore
\[\displaystyle{\left(\frac{S_n(\sqrt{n}t)}{n^{3/4}},\,0\leq t\leq T\right)_{n\geq 1}=\big(\Ss_n(t),\,0\leq t\leq T\big)_{n\geq 1}}\]
converges in distribution to $(\Uc(t),\,0\leq t\leq T)$ on $\Dc([0,T],\R)$. Finally, since the sample paths of $(\Uc(t),\,0\leq t\leq T)$ are continuous, this convergence in distribution holds in $C([0,T],\R)$. This ends the proof of theorem~\ref{MainTheoGauss}.

\setcounter{section}{0}
\renewcommand{\thesection}{\!\!\!\!\!\!}
\setcounter{theo}{0}
\renewcommand{\thetheo}{\Alph{section}.\arabic{theo}}

\section{Appendix\\A proposition on collapsing processes}

\begin{defi} A sequence of real-valued stochastic processes $(\xi_n(t),\,t\geq 0)_{n\geq 1}$ collapses to zero if
\[\forall\eps>0\quad\forall T>0\qquad \lim_{n\to+\infty}\,\P\left(\,\sup_{0\leq t\leq T}\,\left|\xi_n(t)\right|>\eps\,\right)=0.\]
\end{defi}

The concept of collapsing processes has been developed by Francis Comets and Theodor Eisele in~\cite{ComEis}.

\begin{prop} Let $(\xi_n(t),\,t\geq 0)_{n \geq 1}$ be a sequence of positive semimartingales on a probability space $(\Omega,\Fc, \P)$. For any $n\geq 1$, we give ourselves an integer $m_n\geq 1$ and independent standard Brownian motions $\left(B_i \right)_{1\leq i\leq m_n}$ which generate a filtration $\left(\Fc_t \right)_{t \geq 0}$. We assume that there exist $\left(\Fc_t \right)_{t \geq 0}$-adapted processes $(\z_n(t),\,t\geq 0)$ and $(Z_{n,i}(t),\,t\geq 0)_{1\leq i\leq m_n}$ such that
\[
d\xi_n(t) = \z_n(t)dt + \sum_{i=1}^{m_n} Z_{n,i}(t) dB_i(t).
\]
We suppose that there exist $d>1$, positive constants $C_1,\dots,C_5$, increasing sequences $(\kappa_n)_{n \geq1}$, $(\alpha_n)_{n \geq1}$, $(\beta_n)_{n \geq 1}$ and a sequence $(\tau_n)_{n \geq 1}$ of stopping times verifying 
\begin{equation} \tag{$\mathscr{C}_1$}\label{C1}
\kappa_n^{\frac{1}{d}}\alpha_n^{-1} \underset{n\to+\infty}{\longrightarrow} 0, \qquad \kappa_n^{-1} \alpha_n  \underset{n\to+\infty}{\longrightarrow} 0, \qquad \kappa_n^{-1}\beta_n  \underset{n\to+\infty}{\longrightarrow} 0,
\end{equation}
\begin{equation}\tag{$\mathscr{C}_2$}\label{C2}
\forall n\geq 1\qquad \E \left[ \Big( \xi_n(0) \Big)^d \right] \leq C_1 \alpha_n^{-d}, 
\end{equation}
\begin{equation} \tag{$\mathscr{C}_3$}\label{C3}
\forall n\geq 1\quad\forall t\in [0,\tau_n]\qquad \z_n(t) \leq -\kappa_n C_2 \xi_n(t) + \beta_n C_3 + C_4,
\end{equation}
and
\begin{equation}\tag{$\mathscr{C}_4$}\label{C4}
\forall n\geq 1\quad\forall t\in [0,\tau_n]\qquad \sum_{i=1}^{m_n} Z_{n,i}(t)^2 \leq C_5 .
\end{equation}
Then, for any $\eps>0$ and $T>0$, there exist $M>0$ and $n_0\geq 1$ such that
\[\sup_{n \geq n_0} \,\P \left(\, \sup_{0 \leq t \leq T \wedge \tau_n} \xi_n (t) > M \left( \kappa_n^{\frac{1}{d}} \alpha_n^{-1} \vee \alpha_n \kappa_n^{-1} \right)\,\right) \leq \eps.\]
\label{CollapsingProp}
\end{prop}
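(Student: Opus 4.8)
The engine of the proof is the strong mean reversion encoded in $(\mathscr{C}_3)$, which I would exploit through an integrating factor. Set $\eta_n(t)=e^{\kappa_n C_2 t}\xi_n(t)$. On $[0,\tau_n]$ the bound $\zeta_n\leq-\kappa_n C_2\xi_n+\beta_n C_3+C_4$ exactly cancels the drift produced by the factor, leaving $d\eta_n(t)\leq e^{\kappa_n C_2 t}(\beta_n C_3+C_4)\,dt+e^{\kappa_n C_2 t}\sum_{i}Z_{n,i}(t)\,dB_i(t)$. Integrating and dividing back by $e^{\kappa_n C_2 t}$ yields the pathwise estimate
\[\xi_n(t)\leq e^{-\kappa_n C_2 t}\xi_n(0)+\frac{\beta_n C_3+C_4}{\kappa_n C_2}+V_n(t),\qquad t\in[0,\tau_n],\]
where $V_n(t)=\int_0^t e^{-\kappa_n C_2(t-s)}\sum_i Z_{n,i}(s)\,dB_i(s)$ is an Ornstein--Uhlenbeck type integral with $V_n(0)=0$. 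Taking the supremum over $[0,T\wedge\tau_n]$ thus bounds $\sup_{[0,T\wedge\tau_n]}\xi_n$ by the sum of three contributions: the decaying initial value, itself bounded by $\xi_n(0)$; a deterministic floor of size $(\beta_n C_3+C_4)/(\kappa_n C_2)$; and $\sup_{[0,T\wedge\tau_n]}V_n$.

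For the initial value, $(\mathscr{C}_2)$ and Markov's inequality give $\P(\xi_n(0)>\lambda)\leq C_1\alpha_n^{-d}\lambda^{-d}$; evaluated at $\lambda\propto\kappa_n^{1/d}\alpha_n^{-1}$ this is of order $\kappa_n^{-1}$ and tends to $0$. The deterministic floor is handled directly by $(\mathscr{C}_1)$: since $\kappa_n^{-1}\to0$ and $\kappa_n^{-1}\beta_n\to0$, it is of smaller order than the target scale $\kappa_n^{1/d}\alpha_n^{-1}\vee\alpha_n\kappa_n^{-1}$. A short computation using only $(\mathscr{C}_1)$ shows moreover that the natural fluctuation scale $\kappa_n^{-1/2}$ of $V_n$ is always dominated by this target: if both $\kappa_n^{-1/2}>\kappa_n^{1/d}\alpha_n^{-1}$ and $\kappa_n^{-1/2}>\alpha_n\kappa_n^{-1}$ held, one would be forced into $\kappa_n^{(2+d)/(2d)}<\alpha_n<\kappa_n^{1/2}$, which is impossible since $(2+d)/(2d)>1/2$. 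Hence it suffices to control $\sup V_n$ at scale $\kappa_n^{-1/2}$, with a genuine power of $\kappa_n$ of room to spare.

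The remaining and decisive step is the supremum of the Ornstein--Uhlenbeck term. Here $V_n$ solves $dV_n=-\kappa_n C_2 V_n\,dt+\sum_i Z_{n,i}\,dB_i$ with $\sum_i Z_{n,i}^2\leq C_5$ on $[0,\tau_n]$, so It\^o's formula applied to $V_n^{2p}$ produces the differential inequality
\[\frac{d}{dt}\,\E\big[V_n(t\wedge\tau_n)^{2p}\big]\leq-2p\kappa_n C_2\,\E\big[V_n(t\wedge\tau_n)^{2p}\big]+p(2p-1)C_5\,\E\big[V_n(t\wedge\tau_n)^{2p-2}\big],\]
from which an induction on $p$ gives the fixed-time bounds $\E[V_n(t\wedge\tau_n)^{2p}]\leq C(p)\kappa_n^{-p}$. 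The difficulty, which I expect to be the main obstacle, is that one cannot discard the mean-reverting term when passing to the supremum, since it is precisely that term which produces the small scale; a crude bound $\sup|V_n|\leq\sup|e^{\kappa_n C_2 s}N_n|$ reintroduces the exploding factor $e^{\kappa_n C_2 T}$ and is useless. I would instead slice $[0,T]$ into $\sim\kappa_n$ intervals of length $\sim\kappa_n^{-1}$, bound the maximum of $V_n$ over the grid by the $d$-th moments above (the $\sim\kappa_n$ grid points together with the exponent $d$ costing only a $\kappa_n^{o(1)}$ factor), and estimate the within-interval oscillation by Doob's maximal inequality. This yields $\E[\sup_{[0,T\wedge\tau_n]}V_n^{2d}]\leq C\kappa_n^{-d+o(1)}$, and Markov's inequality at the target scale absorbs the slack because the target exceeds $\kappa_n^{-1/2}$ by a true power of $\kappa_n$.

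Finally I would assemble the pieces: splitting the event $\{\sup_{[0,T\wedge\tau_n]}\xi_n>M(\kappa_n^{1/d}\alpha_n^{-1}\vee\alpha_n\kappa_n^{-1})\}$ into the three events attached to the decomposition above, each is made smaller than $\eps/3$ for $M$ and $n$ large by the three estimates, giving the conclusion with a uniform $n_0$. The two routine contributions follow at once from $(\mathscr{C}_1)$ and $(\mathscr{C}_2)$; the substance of the argument, and the source of the exponents in the target scale, lies entirely in the maximal control of the mean-reverting integral $V_n$.
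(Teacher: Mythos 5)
The paper does not actually prove this proposition: it quotes it as Proposition~4.2 of~\cite{DaiPra}, itself an adaptation of the appendix of~\cite{ComEis}, and those references argue along exactly your lines (integrating factor, comparison with the stochastic convolution $V_n$, moment bounds for $V_n$ plus a discretization of $[0,T]$ at mesh $\kappa_n^{-1}$ for its supremum). Your two probabilistic estimates are sound: Markov's inequality with $(\mathscr{C}_2)$ at level $M\kappa_n^{1/d}\alpha_n^{-1}$ gives $C_1M^{-d}\kappa_n^{-1}$, and your observation that the target scale always dominates $\kappa_n^{-1/2}$ by the genuine power $\kappa_n^{1/(2d)}$ (the geometric mean of the two terms in the maximum) is exactly what lets the grid-plus-oscillation argument close, with moments of order $2p\geq 2d$ so that the union bound over the $\sim\kappa_n T$ blocks costs $O\big(M^{-2p}\kappa_n^{1-p/d}\big)$, small for $M$ large uniformly in $n$. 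One technical slip: the differential inequality for $\E\big[V_n(t\wedge\tau_n)^{2p}\big]$ is not valid as written, because on $\{t\geq\tau_n\}$ the dissipative term $-2p\kappa_nC_2\E[\,\cdot\,]$ is absent; the standard repair is to work with the convolution driven by the truncated integrands $Z_{n,i}\ind{[0,\tau_n]}$, which coincides with $V_n$ on $[0,\tau_n]$ and satisfies the bound of $(\mathscr{C}_4)$ for all $t$, hence the clean inequality. That is cosmetic.

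The genuine gap is the deterministic floor $(\beta_nC_3+C_4)/(\kappa_nC_2)$. You assert it is of smaller order than $m_n=\kappa_n^{1/d}\alpha_n^{-1}\vee\alpha_n\kappa_n^{-1}$ ``by $(\mathscr{C}_1)$'', but $\kappa_n^{-1}\beta_n\to 0$ does not imply this: take $\kappa_n=n$, $\alpha_n=n^{1/d}\log n$, $\beta_n=n/\log\log n$; all three limits in $(\mathscr{C}_1)$ hold and eventually $m_n=1/\log n$, yet $\beta_n\kappa_n^{-1}=1/\log\log n\gg m_n$. (The $C_4$ contribution is harmless, since $\kappa_n^{-1}\leq\alpha_1^{-1}\,\alpha_n\kappa_n^{-1}$.) Moreover this is not merely a hole in your write-up: for these sequences the noiseless choice $\xi_n'=-\kappa_nC_2\xi_n+C_3\beta_n$, $\xi_n(0)=0$, $Z_{n,i}\equiv 0$, $\tau_n\equiv+\infty$ satisfies every hypothesis and has $\sup_{0\leq t\leq T}\xi_n(t)\sim C_3\beta_n/(C_2\kappa_n)$, so the stated conclusion itself fails; the proposition as transcribed implicitly needs either $\beta_n=O(\alpha_n)$ (in which case $\beta_n\kappa_n^{-1}=O(\alpha_n\kappa_n^{-1})=O(m_n)$ and your three-way decomposition closes after enlarging $M$) or the additional term $\beta_n\kappa_n^{-1}$ inside the maximum. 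So your architecture is the right one and matches the cited proofs, but this step must enter as an extra hypothesis or a corrected target scale rather than be deduced from $(\mathscr{C}_1)$; note that in the application made in this paper the point is immaterial, since there $C_3$ and $(\beta_n)$ are free parameters and one may simply take $\beta_n=\alpha_n$.
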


This is proposition~4.2~of~\cite{DaiPra}. It is a simple adaptation of the proposition in appendix~of~\cite{ComEis}.

\bibliographystyle{plain}
\bibliography{biblio}

\end{document}